\newcommand{\A}{\mathcal{A}}
\newcommand{\F}{\mathcal{F}}
\newcommand{\HH}{\mathcal{H}}
\newcommand{\K}{\mathcal{K}}
\newcommand{\N}{\mathbb{N}}
\newcommand{\OO}{\mathcal{O}}
\newcommand{\PP}{\mathcal{P}}
\newcommand{\Q}{\textbf{Q}}
\newcommand{\R}{\mathbb{R}}
\newcommand{\RR}{\mathcal{R}}
\newcommand{\X}{\mathbb{X}}
\newcommand{\dom}{\mathrm{dom}}
\newcommand{\Lip}{\mathrm{Lip}}
\journalname{Mathematische Annalen}
\begin{document}

\title{A general theorem of existence of quasi absolutely minimal
  Lipschitz extensions}

\author{Matthew J. Hirn \and Erwan Y. Le Gruyer}

\institute{\textsc{Matthew J. Hirn} \at \'{E}cole normale sup\'{e}rieure\\
D\'{e}partement d'Informatique\\
45 rue d'Ulm\\
75005 Paris, France\\
\email{matthew.hirn@ens.fr}\and
Erwan Y. Le Gruyer \at Institut National des Sciences Appliqu\'{e}es
de Rennes \& IRMAR\\
20, Avenue des Buttes de Co\"{e}smes\\
CS 70839 F - 35708 Rennes Cedex 7 , France\\
\email{Erwan.Le-Gruyer@insa-rennes.fr}}

\maketitle

\begin{abstract}
In this paper we consider a wide class of generalized Lipschitz
extension problems and the corresponding problem of finding absolutely
minimal Lipschitz extensions. We prove that if a minimal Lipschitz
extension exists, then under certain other mild conditions, a quasi
absolutely minimal Lipschitz extension must exist as well. Here we use
the qualifier ``quasi'' to indicate that the extending function in
question nearly satisfies the conditions of being an absolutely
minimal Lipschitz extension, up to several factors that can be made
arbitrarily small.
\subclass{54C20, 58C25, 46T20, 49-XX, 39B05}
\end{abstract}

\section{Introduction} \label{sec: introduction}

In this paper we attempt to generalize Aronsson's result on absolutely minimal
Lipschitz extensions for scalar valued functions to a more general setting
that includes a wide class of functions. The main result
is the existence of a ``quasi-AMLE,'' which intuitively is a function
that nearly satisfies the conditions of absolutely minimal Lipschitz extensions.

Let $E \subset \R^d$ and $f:E \rightarrow \R$ be Lipschitz continuous,
so that
\begin{equation*}
\Lip(f;E) \triangleq \sup_{\substack{x,y \in E \\ x \neq y}}
\frac{|f(x)-f(y)|}{\|x-y\|} < \infty.
\end{equation*}
The original Lipschitz extension problem then asks the following
question: is it possible to extend $f$ to a function $F: \R^d \rightarrow
\R$ such that
\begin{align*}
&F(x) = f(x), \text{ for all } x \in E, \\
&\Lip(F;\R^d) = \Lip(f;E).
\end{align*}
By the work of McShane \cite{mcshane:extensionRangeFcns1934} and
Whitney \cite{whitney:analyticExtensions1934} in $1934$, it is known that
such an $F$ exists and that two extensions can be written explicitly: 
\begin{align}
\Psi(x) &\triangleq \inf_{y \in E} (f(y) +
\Lip(f;E)\|x-y\|), \label{eqn: max lip ext} \\
\Lambda(x) &\triangleq \sup_{y \in E} (f(y) -
\Lip(f;E)\|x-y\|). \label{eqn: min lip ext}
\end{align}
In fact, the two extensions $\Psi$ and $\Lambda$ are extremal, so that
if $F$ is an arbitrary minimal Lipschitz extension of $f$, then $\Lambda \leq
F \leq \Psi$. Thus, unless $\Lambda \equiv \Psi$, the extension $F$ is
not unique, and so one can search for an extending function $F$ that
satisfies additional properties. 

In a series of papers in the $1960's$
\cite{aronsson:minSupFI1965,aronsson:minSupFII1966,aronsson:AMLE1967},
Aronsson proposed the
notion of an absolutely minimal Lipschitz extension (AMLE), which is
essentially the ``locally best'' Lipschitz
extension. His original motivation for the concept was in conjunction
with the infinity Laplacian and infinity harmonic functions. We first
define the property of absolute minimality
independently of the notion of an extension. A function $u: D
\rightarrow \R$, $D \subset \R^d$, is absolutely minimal if 
\begin{equation} \label{eqn: amle aronsson def}
\Lip(u;V) = \Lip(u;\partial V), \quad \text{for all open } V \subset\subset D,
\end{equation}
where $\partial V$ denotes the boundary of $V$, $V \subset\subset
D$ means that $\overline{V}$ is compact in $D$, and $\overline{V}$ is
the closure of $V$. A function $U$ is an AMLE for $f:E \rightarrow \R$
if $U$ is a Lipschitz extension of $f$, and furthermore, if it is also
absolutely minimal on $\R^d \setminus E$. That is:
\begin{align*}
&U(x) = f(x), \text{ for all } x \in E, \\
&\Lip(U;\R^d) = \Lip(f;E), \\
&\Lip(U;V) = \Lip(U;\partial V), \text{ for all open } V
\subset\subset \R^d \setminus E.
\end{align*}
The existence of an AMLE was proved by Aronsson, and
in $1993$ Jensen \cite{jensen:uniqueAMLE1993} proved that AMLEs are
unique under certain conditions (see also
\cite{barles:existNonLinearElliptic2001,armstrong:easyProofJensen2010}).

Since the work of Aronsson, there has been much research devoted to
the study of AMLEs and problems related to them. For a
discussion on many of these ideas, including self contained proofs of
existence and uniqueness, we refer the reader to
\cite{aronsson:tourAMLE2004}.

There are, though, several variants to the classical Lipschitz
extension problem. A general formulation is the following: let
$(\X,d_{\X})$ and $(Z,d_Z)$ be two metric spaces, and define the
Lipschitz constant of a function $f: E \rightarrow Z$, $E \subset \X$,
as:
\begin{equation*}
\Lip(f;E) \triangleq \sup_{\substack{x,y \in E \\ x \neq y}} \frac{d_Z(f(x),f(y))}{d_{\X}(x,y)}.
\end{equation*}
Given a fixed pair of metric spaces $(\X,d_{\X})$ and $(Z,d_Z)$, as well as an
arbitrary function $f: E \rightarrow Z$ with $\Lip(f;E) < \infty$, one
can ask if it is possible to extend $f$ to a function $F: \X
\rightarrow Z$ such that $\Lip(F;\X) = \Lip(f;E)$. This is known as
the isometric Lipschitz extension problem (or property, if it is known
for a pair of metric spaces). Generally speaking it does not hold,
although certain special cases beyond $\X = \R^d$ and $Z = \R$ do
exist. For example, one can take $(\X,d_{\X})$ to be an arbitrary metric
space and $Z = \R$ (simply adapt \eqref{eqn: max lip ext} and
\eqref{eqn: min lip ext}). Another, more powerful generalization comes from
the work of Kirszbraun \cite{kirszbraun:lipschitzTransformations1934} (and later,
independently by Valentine
\cite{valentine:vectorLipschitzExt1945}). In his paper in $1934$, he
proved that if $\X$ and $Z$ are arbitrary Hilbert spaces, then they
have the isometric Lipschitz extension property. Further examples
exist; a more thorough discussion of the isometric Lipschitz extension
property can be found in \cite{wells:embeddingExtensions1975}.

For pairs of metric spaces with the isometric Lipschitz extension
property, one can then try to generalize the notion of an AMLE. Given
that an AMLE should locally be the best possible such extension, the
appropriate generalization for arbitrary metric spaces is the
following. Let $E \subset \X$ be closed, and suppose we are given a
function $f: E \rightarrow Z$ and a
minimal Lipschitz extension $U: \X \rightarrow Z$ such that
$\Lip(U;\X) = \Lip(f;E)$. The function $U$ is an AMLE if for every
open subset $V \subset \X \setminus E$ and every Lipschitz
mapping $\widetilde{U}: \X \rightarrow Z$ that coincides with $U$ on
$\X \setminus V$, we have
\begin{equation} \label{eqn: amle abstract def}
\Lip(U;V) \leq \Lip(\widetilde{U};V).
\end{equation}
When $(\X,d_{\X})$ is path connected, \eqref{eqn: amle abstract def} is
equivalent to the Aronsson condition that 
\begin{equation} \label{eqn: aronsson amle def}
\Lip(U;V) = \Lip(U;\partial V), \quad \text{for all open } V \subset
\X \setminus E.
\end{equation}
When $(\X,d_{\X})$ is an
arbitrary length space and $Z = \R$, there are several proofs of
existence of AMLE's
\cite{milman:absMinExtMetric1999,juutinen:amleMetricSpace2002,legruyer:amlePDE2007}
(some under certain conditions). The proof of uniqueness in this
scenario is given in \cite{peres:tugOfWar2009}.

Extending results on AMLE's to non scalar valued functions presents
many difficulties, which in turn has limited the number of results
along this avenue. Two recent papers have made significant progress,
though. In \cite{naor:treeAMLE2012}, the authors consider the case
when $(\X,d_{\X})$ is a locally compact length space, and $(Z,d_Z)$ is a
metric tree; they are able to prove existence and uniqueness of AMLE's
for this pairing. The case of vector valued functions with $(\X,d_{\X}) =
\R^d$ and $(Z,d_Z) = \R^m$ is considered in
\cite{sheffield:vectorAMLE2012}. In this case an AMLE is not
necessarily unique, so the authors propose a stronger condition called
tightness for which they are able to get existence and uniqueness
results in some cases. 

In this paper we seek to add to the progress on the theory of non
scalar valued AMLE's. We propose a generalized notion of an AMLE for a large
class of isometric Lipschitz extension problems, and prove a general theorem
for the existence of what we call a quasi-AMLE. A quasi-AMLE is,
essentially, a minimal Lipschitz extension that comes within
$\varepsilon$ of satisfying \eqref{eqn: amle abstract def}. We work not only with
general metric spaces, but also a general functional $\Phi$ that
replaces the specific functional $\Lip$. In our setting $\Lip$ is an
example of the type of functionals we consider, but others exist as
well.

One such example is given in
\cite{legruyer:minimalLipschitz1Field2009}. If we consider the classic
Lipschitz extension problem as the zero-order problem, then for the
first order problem we would want an extension that minimizes
$\Lip(\nabla F; \R^d)$. In this case, one is given
a subset $E \subset \R^d$ and a $1$-field $\PP_E = \{P_x\}_{x \in E}
\subset \PP^1(\R^d,\R)$, consisting of first order polynomials mapping
$\R^d$ to $\R$ that are indexed by the elements of $E$. The
goal is to extend $\PP_E$ to a function $F \in C^{1,1}(\R^d)$ such that
two conditions are satisfied: $1.)$ for each $x \in E$, the first order Taylor
polynomial $J_xF$ of $F$ at $x$ agrees with $P_x$; and $2.)$
$\Lip(\nabla F; \R^d)$ is minimal. By a result of Le Gruyer
\cite{legruyer:minimalLipschitz1Field2009}, such an extension is
guaranteed to exist with Lipschitz constant $\Gamma^1(\PP_E)$, assuming
that $\Gamma^1(\PP_E) < \infty$ (here $\Gamma^1$ is a functional defined
in \cite{legruyer:minimalLipschitz1Field2009}). The functional
$\Gamma^1$ can be thought of as the Lipschitz constant for
$1$-fields. By the results of this paper, one is guaranteed the
existence of a quasi-AMLE for this setting as well.

\section{Setup and the main theorem}

\subsection{Metric spaces} \label{sec: metric spaces}

Let $(\X,d_{\X})$ and $(Z,d_Z)$ be metric spaces. We will consider
functions of the form $f: E \rightarrow Z$, where $E \subset
\X$. For the range, we require:
\begin{enumerate}
\item
$(Z,d_Z)$ is a complete metric space.
\end{enumerate}
For the domain, $(\X,d_{\X})$, we require some additional geometrical properties:
\begin{enumerate}
\item
$(\X,d_{\X})$ is complete and proper (i.e., closed balls are compact).
\item
$(\X,d_{\X})$ is midpoint convex. Recall that this means that for any two
points $x,y \in \X$, $x \neq y$, there exists a third point $m(x,y) \in
\X$ for which 
\begin{equation*}
d_{\X}(x,m(x,y)) = d_{\X}(m(x,y),y) = \frac{1}{2} d_{\X}(x,y).
\end{equation*}
Such a point $m(x,y)$ is called the midpoint and $m: \X \times \X \rightarrow
\X$ is called the midpoint map. Since we have also assumed that
$(\X,d_{\X})$ is complete, this implies that $(\X,d_{\X})$ is a
geodesic (or strongly intrinsic) metric space. By definition then,
every two points $x,y \in \X$ are joined by a geodesic curve with
finite length equal to $d_{\X}(x,y)$. 
\item
$(\X,d_{\X})$ is distance convex, so that for all $x,y,z \in \X$, $x \neq y$,
\begin{equation*}
d_{\X}(m(x,y),z) \leq \frac{1}{2} (d_{\X}(x,z) + d_{\X}(y,z)).
\end{equation*}
Note that this implies that $(\X,d_{\X})$ is ball convex, which in turn
implies that every ball in $(\X,d_{\X})$ is totally convex. By definition,
this means that for any two points $x,y$ lying in a ball $B \subset \X$, the
geodesic connecting them lies entirely in $B$. Ball convexity also
implies that the midpoint map is unique, and, furthermore, since
$(\X,d_{\X})$ is also complete, that the geodesic between two points
is unique.
\end{enumerate}
We remark that the $(\X,d_{\X})$ is path connected by these assumptions,
and so \eqref{eqn: amle abstract def} is equivalent to \eqref{eqn:
  aronsson amle def} for all of the cases that we consider here.

\subsection{Notation}

Set $\N \triangleq \{0, 1, 2, \ldots \}$, $\N^* \triangleq \{1, 2,
3, \ldots \}$, and $\R^+ \triangleq [0,\infty)$. Let $S$ be an
arbitrary subset of $\X$, i.e., $S \subset \X$, and let $\mathring{S}$
and $\overline{S}$ denote the interior of $S$ and the closure of $S$,
respectively. For any $x \in \X$ and $S \subset \X$, set
\begin{equation*}
d_{\X}(x,S) \triangleq \inf \{ d_{\X}(x,y) \mid y \in S \}.
\end{equation*}
For each $x \in \X$ and $r > 0$, let $B(x;r)$ denote the open ball of
radius $r$ centered at $x$:
\begin{equation*}
B(x;r) \triangleq \{y \in \X \mid d_{\X}(x,y) < r\}.
\end{equation*}
We will often utilize a particular type of ball: for any $x,y \in \X$, define 
\begin{equation*}
B_{1/2}(x,y) \triangleq B\left(m(x,y); \frac{1}{2}d_{\X}(x,y) \right).
\end{equation*}
By $\F(\X,Z)$, we denote the space of functions mapping subsets
of $\X$ into $Z$:
\begin{equation*}
\F(\X,Z) \triangleq \{f:E \rightarrow Z \mid E \subset \X\}.
\end{equation*}
If $f \in \F(\X,Z)$, set $\dom(f)$ to be the domain of $f$. We 
use $E = \dom(f)$ interchangeably depending on the situation. We also
set $\K(\X)$ to be the set of all compact subsets of $\X$. 

\subsection{General Lipschitz extensions}

\begin{definition}
Given $f \in \F(\X,Z)$, a function $F \in \F(\X,Z)$ is an {\it
  extension} of $f$ if
\begin{equation*}
\dom(f) \subset \dom(F) \quad \text{and} \quad F(x) = f(x), \text{ for
  all } x \in \dom(f).
\end{equation*}
\end{definition}

We shall be interested in arbitrary functionals $\Phi$ with domain $\F(\X,Z)$ such that:
\begin{align*}
\Phi: \F(\X,Z) &\rightarrow \F(\X \times \X, \R^+ \cup \{\infty\}) \\
f &\mapsto \Phi(f): \dom(f) \times \dom(f) \rightarrow
\R^+ \cup \{ \infty \}.
\end{align*}
In order to simplify the notation slightly, for any $f \in \F(\X,Z)$
and $x,y \in \dom(f)$, we set
\begin{equation*}
\Phi(f;x,y) \triangleq \Phi(f)(x,y).
\end{equation*}
We also extend the map $\Phi(f)$ to subsets $D \subset \dom(f)$ as follows:
\begin{equation*}
\Phi(f;D) \triangleq \sup_{\substack{x,y \in D \\ x \neq y}} \Phi(f;x,y).
\end{equation*}
The map $\Phi$ serves as a generalization of the standard Lipschitz constant
$\Lip(f;D)$ first introduced in Section \ref{sec: introduction}. As such, one can
think of it in the context of minimal extensions. Let $\F_{\Phi}(\X,Z)
\subset \F(\X,Z)$ denote those functions in $\F(\X,Z)$ for which $\Phi$
is finite, i.e.,
\begin{equation*}
\F_{\Phi}(\X,Z) \triangleq \{ f \in \F(\X,Z) \mid \Phi(f; \dom(f)) < \infty \}.
\end{equation*}
We then have the following definition.

\begin{definition} \label{defn: minimal extension}
Let $f \in \F_{\Phi}(\X,Z)$ and let $F \in \F_{\Phi}(\X,Z)$ be an extension of
$f$. We say $F$ is a {\it minimal extension} of the function $f$ if
\begin{equation} \label{eqn: min ext property}
\Phi(F;\dom(F)) = \Phi(f; \dom(f)).
\end{equation}
\end{definition}

One can then generalize the notion of an absolutely minimal
Lipschitz extension (AMLE) in the following way:
\begin{definition} \label{def: abstract general amle def}
Let $f \in \F_{\Phi}(\X,Z)$ with $\dom(f)$ closed and let $U \in \F_{\Phi}(\X,Z)$ be a
minimal extension of $f$ with $\dom(U) = \X$. Then $U$ is an {\it
  absolutely minimal Lipschitz extension} of $f$ if for every
open set $V \subset \X \setminus \dom(f)$ and every $\widetilde{U} \in
\F_{\Phi}(\X,Z)$ with $\dom(\widetilde{U}) = \X$ that coincides with
$U$ on $\X \setminus V$, 
\begin{equation*}
\Phi(U;V) \leq \Phi(\widetilde{U};V).
\end{equation*}
\end{definition}
Alternatively, we can extend Aronsson's original definition of AMLEs:
\begin{definition} \label{def: aronsson general amle def}
Let $f \in \F_{\Phi}(\X,Z)$ with $\dom(f)$ closed and let $U \in
\F_{\Phi}(\X,Z)$ be a minimal extension
of $f$ with $\dom(U) = \X$. Then $U$ is an {\it absolutely minimal Lipschitz
  extension} of $f$ if
\begin{equation} \label{eqn: general amle condition}
\Phi(U; V) = \Phi(U; \partial V), \quad \text{for all open }
V \subset \X \setminus \dom(f).
\end{equation}
\end{definition}
In fact, since we have assumed that $(\X,d_{\X})$ is path connected,
Definitions \ref{def: abstract general amle def} and \ref{def:
  aronsson general amle def} are equivalent; see Appendix \ref{sec:
  equiv amle defs} for the details.

In this paper we prove the existence of a function
$U$ that is a minimal extension of $f$, and that ``nearly''
satisfies \eqref{eqn: general amle condition}. In order to make this
statement precise, we first specify the properties that $\Phi$ must
satisfy, and then formalize what we mean by ``nearly.'' Before we get
to either task, though, we first define the following family of curves.

\begin{definition}\label{def: gamma curve}
For each $x,y \in \X$, $x \neq y$, let $\Gamma(x,y)$ denote
the set of curves
\begin{equation*}
\gamma: [0,1] \rightarrow \overline{B}_{1/2}(x,y),
\end{equation*}
such that $\gamma(0) = x$, $\gamma(1) = y$, $\gamma$ is continuous,
and $\gamma$ is monotone in the following sense:
\begin{equation*}
\text{If } 0 \leq t_1 < t_2 \leq 1, \text{ then } d_{\X}(\gamma(0),
\gamma(t_1)) < d_{\X}(\gamma(0), \gamma(t_2)).
\end{equation*}
\end{definition}

The required properties of $\Phi$ are the following (note that \ref{P1} has
already been stated as a definition):

\begin{enumerate}
\setcounter{enumi}{-1}
\renewcommand{\theenumi}{$(P_{\arabic{enumi}})$}
\renewcommand{\labelenumi}{\theenumi}
\item\label{P0}
$\Phi$ is symmetric and nonnegative: \\
For all $f \in \F(\X,Z)$ and for all $x,y \in \dom(f)$,
\begin{equation*}
\Phi(f;x,y) =  \Phi(f;y,x) \geq 0.
\end{equation*}

\item\label{P1}
Pointwise evaluation: \\
For all $f \in \F(\X,Z)$ and for all $D \subset \dom(f)$,
\begin{equation*}
\Phi(f;D) = \sup_{\substack{x,y \in D \\ x \neq y}} \Phi(f;x,y).
\end{equation*}

\item\label{P2}
$\Phi$ is minimal: \\
For all $f \in \F_{\Phi}(\X,Z)$ and for all $D \subset \X$ such that $\dom(f)
\subset D$, there exists an extension $F: D \rightarrow Z$ of $f$
such that
\begin{equation*}
\Phi(F;D) = \Phi(f;\dom(f)).
\end{equation*}

\item\label{P3}
Chasles' inequality: \\
For all $f \in \F_{\Phi}(\X,Z)$ and for all $x,y \in \dom(f)$, $x \neq y$, such that
$\overline{B}_{1/2}(x,y) \subset \dom(f)$, there exists a curve $\gamma \in
\Gamma (x,y)$ such that
\begin{equation*}
\Phi(f;x,y) \leq \inf_{t \in [0,1]} \max \left\{
\Phi(f;x,\gamma(t)), \Phi(f;\gamma(t),y) \right\}.
\end{equation*}

\item\label{P4}
Continuity of $\Phi$: \\
Let $f \in \F_{\Phi}(\X,Z)$. For each $x,y \in \dom(f)$,
$x \neq y$, and for all $\varepsilon > 0$, there exists
$\eta = \eta(\varepsilon, d_{\X}(x,y)) > 0$ such that 
\begin{equation*}
\text{For all } z \in B(y;\eta) \cap \dom(f), \quad |\Phi(f;x,y) -
\Phi(f;x,z)| < \varepsilon.
\end{equation*}

\item\label{P5}
Continuity of $f$: \\
If $f \in \F_{\Phi}(\X,Z)$, then $f: \dom(f) \rightarrow Z$ is a
continuous function.
\end{enumerate}

\begin{remark}
Property \ref{P3} is named after French mathematician Michel Chasles.
\end{remark}

\subsection{Examples of the metric spaces and the functional $\Phi$}

Before moving on, we give some examples of the metric spaces $(\X,d_{\X})$
and $(Z,d_Z)$ along with the functional $\Phi$.

\subsubsection{Scalar valued Lipschitz extensions}

The scalar valued case discussed at the outset is one example. Indeed,
one can take $\X = \R^d$ and $d_{\X}(x,y) = \|x-y\|$, where $\| \cdot
\|$ is the Euclidean distance. For the range, set $Z = \R$ and $d_Z(a,b) =
|a-b|$. For any $f:E \rightarrow \R$, where $E \subset \X$, define
$\Phi$ as:
\begin{align*}
\Phi(f;x,y) &= \Lip(f;x,y) \triangleq \frac{|f(x) - f(y)|}{\|x-y\|}, \\
\Phi(f;E) &= \Lip(f;E) \triangleq \sup_{\substack{x,y \in E \\ x \neq y}} \Lip(f;x,y).
\end{align*}

Clearly \ref{P0} and \ref{P1} are satisfied. By the work of McShane
\cite{mcshane:extensionRangeFcns1934} and Whitney
\cite{whitney:analyticExtensions1934}, \ref{P2} is also
satisfied. \ref{P3} is satisfied with $\gamma(t) = (1-t)x + ty$, and
\ref{P4} and \ref{P5} are easy to verify.

\subsubsection{Lipschitz mappings between Hilbert spaces}

More generally, one can take $(\X,d_{\X}) = \HH_1$ and $(Z,d_Z) = \HH_2$,
where $\HH_1$ and $\HH_2$ are Hilbert spaces (note, since we assume
that $(\X,d_{\X})$ is proper, there are some restrictions on $\HH_1$). Then
for any $f: E \rightarrow \HH_2$, with $E \subset \HH_1$, define $\Phi$ as:
\begin{align*}
\Phi(f;x,y) &= \Lip(f;x,y) \triangleq
\frac{\|f(x)-f(y)\|_{\HH_2}}{\|x-y\|_{\HH_1}}, \\
\Phi(f;E) &= \Lip(f;E) \triangleq \sup_{\substack{x,y \in E \\ x \neq
    y}} \Lip(f;x,y).
\end{align*}

Clearly \ref{P0} and \ref{P1} are satisfied. By the work of Kirszbraun
\cite{kirszbraun:lipschitzTransformations1934} and later Valentine
\cite{valentine:vectorLipschitzExt1945}, \ref{P2} is also
satisfied. \ref{P3} is satisfied with $\gamma(t) = (1-t)x + ty$, and
\ref{P4} and \ref{P5} are easy to verify.

\subsubsection{Lipschitz mappings between metric spaces}

More generally still, one can take any pair of metric spaces $(\X,d_{\X})$
and $(Z,d_Z)$ satisfying the assumptions of Section \ref{sec: metric
  spaces}. Clearly \ref{P0}, \ref{P1}, \ref{P4}, and \ref{P5} are satisfied. For
\ref{P3}, we can take $\gamma \in \Gamma(x,y)$ to be the unique
geodesic between $x$ and $y$. All that remains to check, then, is
\ref{P2}, the existence of a minimal extension. Such a condition is not
satisfied between two metric spaces in general, although examples
beyond those already mentioned do exist. For example, one can take
$(\X,d_{\X})$ to be any metric space and $(Z,d_Z) = \ell_n^{\infty}$,
where $\ell_n^{\infty}$ denotes $\R^n$ with the norm $\|x\|_{\infty}
\triangleq \max\{|x_j| \mid j = 1, \ldots, n\}$. See
\cite{wells:embeddingExtensions1975}, Theorem $11.2$, Chapter $3$, as
well as the discussion afterwards . See also
\cite{nachbin:hahnBanachLinearTrans1950,kelley:banachExtension1952}.

\subsubsection{$1$-fields} \label{sec: 1-fields}

Let $(\X,d_{\X}) = \R^d$ endowed with the Euclidean metric. Set
$\PP^1(\R^d,\R)$ to be the set of first degree polynomials (affine
functions) mapping $\R^d$ to $\R$. We take $Z = \PP^1(\R^d,\R)$, and
write each $P \in \PP^1(\R^d,\R)$ in the following form:
\begin{equation*}
P(a) = p_0 + D_0p \cdot a, \quad p_0 \in \R, \enspace D_0p \in \R^d,
\enspace a \in \R^d.
\end{equation*}
For any $P,Q \in \PP^1(\R^d,\R)$, we then define $d_Z$ as:
\begin{equation*}
d_Z(P,Q) \triangleq |p_0 - q_0| + \|D_0p - D_0q\|,
\end{equation*}
where $|\cdot|$ is just the absolute value, and $\|\cdot\|$ is the
Euclidean distance on $\R^d$.

For a function $f \in \F(\X,Z)$, we use the following notation (note,
as usual, $E \subset \X$):
\begin{align*}
f: \, &E \rightarrow \PP^1(\R^d,\R) \\
&x \mapsto f(x)(a) = f_x + D_xf \cdot (a - x),
\end{align*}
where $f_x \in \R$, $D_xf \in \R^d$, and $a \in \R^d$ is the
evaluation variable of the polynomial $f(x)$. Define the functional
$\Phi$ as:
\begin{align*}
\Phi(f;x,y) &= \Gamma^1(f;x,y) \triangleq 2 \sup_{a \in \R^d} \frac{|f(x)(a) -
  f(y)(a)|}{\|x-a\|^2 + \|y-a\|^2}, \\
\Phi(f;E) &= \Gamma^1(f;E) \triangleq \sup_{\substack{x,y \in E \\ x
    \neq y}} \Gamma^1(f;x,y). 
\end{align*}

Using the results contained in
\cite{legruyer:minimalLipschitz1Field2009}, one can show that for
these two metric spaces and for this definition of $\Phi$, that properties
\ref{P0}-\ref{P5} are satisfied; the full details are given in Appendix \ref{sec:
  proof of P3 for 1 fields}. In particular, there exists an extension
$U: \R^d \rightarrow \PP^1(\R^d,\R)$, $U(x)(a) = U_x + D_xU \cdot (a-x)$,
such that $U(x) = f(x)$ for all $x \in E$ and $\Phi(U;\R^d) =
\Phi(f;E)$. Furthermore, define the function $F: \R^d \rightarrow \R$ as
$F(x) = U_x$ for all $x \in \R^d$. Note that $F \in C^{1,1}(\R^d)$, and
set for each $x \in \R^d$, $J_xF(a) \triangleq F(x) + \nabla F(x) \cdot (a-x) \in
\PP^1(\R^d,\R)$ to be the first order Taylor expansion of $F$ around
$x$. Then $F$ satisfies the following properties:
\begin{enumerate}
\item
$J_xF = f(x)$ for all $x \in E$.
\item
$\Lip(\nabla F) = \Gamma^1(f;E)$.
\item
If $\widetilde{F} \in C^{1,1}(\R^d)$ also satisfies $J_x\widetilde{F}
= f(x)$ for all $x \in E$, then $\Lip(\nabla F) \leq \Lip(\nabla
\widetilde{F})$. 
\end{enumerate}
Thus $F$ is the extension of the $1$-field $f$ with minimum Lipschitz
derivative (see \cite{legruyer:minimalLipschitz1Field2009} for the
proofs and a complete explanation). The $1$-field $U$ is the corresponding set of jets of
$F$. For an explicit construction of $F$ when $E$ is finite we refer
the reader to \cite{wells:diffFcnsLipDer1973}. 

\subsubsection{$m$-fields}

A similar result for $m$-fields, where $m \geq 2$, is an open
problem. In particular, it is unknown what the correct corresponding
functional $\Phi = \Gamma^m$ is. It seems plausible, though, that such
a functional will satisfy the properties \ref{P0}-\ref{P5}.

\subsection{Main theorem}

The AMLE condition \eqref{eqn: general amle condition} is for any open
set off of the domain of the initial function $f$. In our analysis, we
look at subfamily of open sets that approximates the family of all open
sets. In particular, we look at finite unions of open balls. The
number of balls in a particular union is capped by a universal
constant, and furthermore, the radius of each ball must also be larger than
some constant. For any $\rho > 0$ and $N_0 \in \N$, define such a
collection as:
\begin{equation*}
\OO(\rho,N_0) \triangleq \left\{ \Omega = \bigcup_{n=1}^N B(x_n;r_n)
  \mid x_n \in \X, \enspace r_n \geq \rho, \enspace N \leq N_0 \right\}.
\end{equation*}
Note that as $\rho \rightarrow 0$ and $N_0 \rightarrow \infty$,
$\OO(\rho,N_0)$ contains all open sets if $(\X,d_{\X})$ is compact. We shall always use $\Omega$
to denote sets taken from $\OO(\rho,N_0)$. For any such set, we use
$\RR(\Omega)$ to denote the collection of balls that make up $\Omega$:
\begin{equation*}
\RR(\Omega) = \left\{ B(x_n;r_n) \mid n = 1, \ldots, N, \enspace \Omega =
\bigcup_{n=1}^N B(x_n;r_n) \right\}.
\end{equation*}

We also define, for any $f \in \F(\X,Z)$, any open $V \subset
\dom(f)$, $V \neq \X$, and any $\alpha > 0$, the following
approximation of $\Phi(f;V)$:
\begin{equation*}
\Psi(f;V;\alpha) \triangleq \sup \left\{ \Phi(f;x,y) \mid B(x;d_{\X}(x,y))
  \subset V, \enspace d_{\X}(x,\partial V) \geq \alpha \right\}.
\end{equation*}
Using these two approximations, our primary result is the following:

\begin{theorem}\label{Main.theo1}
Let $(\X,d_{\X})$ and $(Z,d_Z)$ be metric spaces satisfying the assumptions
of Section \ref{sec: metric spaces}, let $\Phi$ be a functional
satisfying properties \ref{P0}-\ref{P5}, and let $X \in \K(\X)$. Given $f \in \F_{\Phi}(X,Z)$,
$\rho > 0$, $N_0 \in \N$, $\alpha > 0$, and $\sigma_0 > 0$, there
exists $U = U(f, \rho, N_0, \alpha, \sigma_0) \in \F_{\Phi}(X,Z)$ such that
\begin{enumerate}
\item
$U$ is a minimal extension of $f$ to $X$; that is,
\begin{align*}
&\dom(U) = X, \\
&U(x) = f(x), \text{ for all } x \in \dom(f), \\
&\Phi(U;X) = \Phi(f;\dom(f)).
\end{align*}
\item
The following quasi-AMLE condition is satisfied on $X$:
\begin{equation} \label{eqn: quasi-amle condition}
\Psi(U;\Omega;\alpha) - \Phi(U;\partial \Omega) < \sigma_0, \quad
\text{for all } \Omega \in \OO(\rho,N_0), \enspace \Omega \subset X
\setminus \dom(f).
\end{equation}
\end{enumerate}
\end{theorem}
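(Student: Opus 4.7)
The plan is to build $U$ by an iterative flattening construction, starting from any minimal extension guaranteed by \ref{P2}. Set $U_0$ to be any extension of $f$ to $X$ with $\Phi(U_0;X) = \Phi(f;\dom(f))$. At stage $k$, suppose there is $\Omega_k \in \OO(\rho,N_0)$ with $\Omega_k \subset X \setminus \dom(f)$ for which $\Psi(U_k;\Omega_k;\alpha) - \Phi(U_k;\partial\Omega_k) \geq \sigma_0$; otherwise we are done. Apply \ref{P2} to the trace $U_k|_{\partial\Omega_k}$ to obtain $V_k : \overline{\Omega}_k \to Z$ with $\Phi(V_k;\overline{\Omega}_k) = \Phi(U_k;\partial\Omega_k)$, and glue $U_{k+1} := V_k$ on $\overline{\Omega}_k$ and $U_{k+1} := U_k$ elsewhere. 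By construction this replacement kills the quasi-AMLE defect on the particular test set $\Omega_k$.

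The first nontrivial step is to verify that each $U_{k+1}$ remains a minimal extension of $f$, i.e.\ $\Phi(U_{k+1};X) \leq \Phi(f;\dom(f))$. The only case requiring work is a pair $x \in \Omega_k$, $y \in X \setminus \overline{\Omega}_k$. For such a pair, \ref{P3} (applied directly if $\overline{B}_{1/2}(x,y) \subset X$, or after first extending $U_{k+1}$ via \ref{P2} to a slightly larger domain) furnishes a monotone curve $\gamma \in \Gamma(x,y)$; continuity forces $\gamma(t^*) \in \partial\Omega_k$ for some $t^* \in (0,1)$, and since $U_{k+1}$ agrees with $U_k$ on $\partial\Omega_k$ one concludes
\begin{equation*}
\Phi(U_{k+1};x,y) \leq \max\bigl\{ \Phi(V_k;x,\gamma(t^*)),\, \Phi(U_k;\gamma(t^*),y) \bigr\} \leq \Phi(f;\dom(f)).
\end{equation*}
Distance convexity of $(\X,d_\X)$ is what keeps $\overline{B}_{1/2}(x,y)$ under control, and midpoint convexity plus completeness are what justify the existence and uniqueness of $\gamma$.

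The harder step is to show termination. The plan is to build a finite $\sigma_0/3$-net $\NN \subset \OO(\rho,N_0)$: since $X$ is compact, $(\X,d_{\X})$ is proper, each $\Omega$ is a union of at most $N_0$ balls of radius at least $\rho$, and the parameters $(x_n,r_n)$ therefore range over a totally bounded set; combined with \ref{P4}--\ref{P5}, this makes both $U \mapsto \Psi(U;\Omega;\alpha)$ and $U \mapsto \Phi(U;\partial\Omega)$ continuous enough in the ball parameters that it is sufficient to check \eqref{eqn: quasi-amle condition} on $\NN$ with tolerance $\sigma_0/3$. Once we restrict attention to $\NN$, a Lyapunov functional built from a finite sorted (lexicographic) aggregate of values $\Phi(U_k;x,y)$ at a suitable $\alpha$-thick finite grid of admissible pairs decreases by a definite amount at each stage: on $\overline{\Omega}_k$ it drops by at least $\sigma_0$, outside $\overline{\Omega}_k$ it is unchanged, and on straddling pairs it is controlled by the bound of the previous paragraph. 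The iteration thus terminates after finitely many stages, and the terminal $U$ is the desired quasi-AMLE.

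The main obstacle is exactly this last step. Flattening $U_k$ on $\Omega_k$ strictly reduces $\Phi$ there, but it may perturb $\Phi$ on pairs that straddle $\partial\Omega_k$, so a naive scalar Lyapunov quantity need not decrease; the lexicographic device, together with the discretization afforded by $\NN$, is what forces global monotone decrease. The cut-offs $\rho>0$, $\alpha>0$, $N_0<\infty$, and the slack $\sigma_0>0$ are all essential to close this compactness argument, which is precisely why only a quasi-AMLE, rather than a genuine AMLE, is obtained in this generality.
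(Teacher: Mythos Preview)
Your construction of the sequence $\{U_k\}$ and your verification that each $U_k$ remains a minimal extension of $f$ are correct and essentially identical to the paper's treatment (its Lemma on the correction operator $H$, proved via \ref{P3} with the same extension-to-a-larger-compact trick when $\overline{B}_{1/2}(x,y)\not\subset X$).

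The genuine gap is in termination. Your lexicographic Lyapunov functional need not decrease. On pairs inside $\overline{\Omega}_k$ the values drop by at least $\sigma_0$ and outside they are unchanged, but on straddling pairs the only bound your Chasles argument gives is $\Phi(U_{k+1};x,y)\leq K=\Phi(f;\dom(f))$, \emph{not} $\Phi(U_{k+1};x,y)\leq\Phi(U_k;x,y)$. A straddling grid pair may therefore jump from, say, $K-\sigma_0/2$ up to $K$, and this can undo, lexicographically, the drop on interior pairs: the sorted vector of grid values need not decrease, nor does any finite sum or weighted average, since you have no control on the number of straddling pairs relative to interior ones. The reduction to a finite net $\NN$ does not help here; the obstruction is already present for a single fixed $\Omega_k$.

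The paper's termination argument avoids comparing $U_{k+1}$ to $U_k$ on straddling pairs altogether. It tracks $\Phi(U_n;\Omega_n)$ directly and proves by induction on $p$ the property $(\Q_p)$: there exists $M_p$ with $\Phi(U_n;\Omega_n)<K-p\,\sigma_0/2$ for all $n>M_p$, which forces termination since $\Phi\geq 0$. The inductive step accumulates the correction sets into $\A_{p,n}=\bigcup_{M_{p-1}<m\leq n}\Omega_m$; a Geometrical Lemma (using $r_n\geq\rho$ and compactness of $X$) shows this union stabilizes in Hausdorff distance, so eventually every ball $B(x;d_{\X}(x,y))$ with $d_{\X}(x,\partial\Omega_n)\geq\alpha$ already lies in $\A_{p,M_p}$ up to an $\varepsilon$-error. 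A separate Customs Lemma then shows that for such $x,y$ one has $\Phi(U_{n-1};x,y)\leq\max_j\Phi(U_j;\Omega_j)$, by iterating \ref{P3} together with a local coherence lemma identifying, near each point of $\A$, an index $j$ for which $U_n=U_j$. Combining this with the inductive hypothesis and \ref{P4} yields $(\Q_p)$. The key conceptual difference from your approach is that the paper exploits the \emph{cumulative} geometry of all prior corrections rather than a step-by-step monotonicity.
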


We call such extensions quasi-AMLEs, and view them as a first step
toward proving the existence of AMLEs under these general
conditions. We note that there are essentially four areas of
approximation. The first is that we extend to an arbitrary, but fixed
compact set $X \subset \X$ as opposed to the entire space. The second
was already mentioned; rather than look at
all open sets, we look at those belonging to $\OO(\rho,N_0)$. Since
$X$ is compact, as $\rho \rightarrow 0$ and $N_0 \rightarrow
\infty$, $\OO(\rho,N_0)$ will contain all open sets in $X$. Third, we
allow ourselves a certain amount of error with the parameter
$\sigma_0$. As $\sigma_0 \rightarrow 0$, the values of the Lipschitz
constants on $\Omega$ and $\partial \Omega$ should coincide. The last
part of the approximation is the use of the functional $\Psi$ to
approximate $\Phi$ on each $\Omega \in \OO(\rho,N_0)$. While this may
at first not seem as natural as the other areas of approximation,
the following proposition shows that in fact $\Psi$ works rather well
in the context of the AMLE problem.

\begin{proposition} \label{prop: alpha to zero}
Let $f \in \F_{\Phi}(\X,Z)$. For any open $V \subset \dom(f)$,
$V \neq \X$, and $\alpha \geq 0$, let us define
\begin{equation*}
V_{\alpha}  \triangleq \{ x \in V \mid d_{\X}(x,\partial V) \geq \alpha \}.
\end{equation*}
Then for all $ \alpha > 0$, 
\begin{equation}\label{Psi1}
\Phi(f;V_{\alpha}) \leq \max \{\Psi(f;V;\alpha), \Phi(f;\partial V_{\alpha})\},
\end{equation}
and
\begin{equation}\label{Psi2}
\Phi(f;V) = \max \{\Psi(f;V;0), \Phi(f;\partial V)\}.
\end{equation}
\end{proposition}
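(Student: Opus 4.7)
My plan is to prove $(\ref{Psi1})$ first via a case analysis on pairs $(x,y) \in V_\alpha \times V_\alpha$, and then to derive $(\ref{Psi2})$ by passing to the limit $\alpha \downarrow 0$ together with the continuity afforded by $(P_4)$ and $(P_5)$.

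For $(\ref{Psi1})$, I would fix $x, y \in V_\alpha$ with $x \neq y$ and consider three exhaustive cases. If $B(x; d_\X(x, y)) \subset V$, then $d_\X(x, \partial V) \geq d_\X(x,y)$, and since $x \in V_\alpha$ gives $d_\X(x, \partial V) \geq \alpha$, the pair $(x,y)$ is admissible in the supremum defining $\Psi(f;V;\alpha)$, so $\Phi(f;x,y) \leq \Psi(f;V;\alpha)$. If instead $B(y; d_\X(x, y)) \subset V$, the same conclusion follows via the symmetry property $(P_0)$. Otherwise both $d_\X(x, \partial V) < d_\X(x, y)$ and $d_\X(y, \partial V) < d_\X(x, y)$: here I would invoke Chasles' inequality $(P_3)$ applied to $(x,y)$ to produce a curve $\gamma \in \Gamma(x, y)$ with $\Phi(f;x,y) \leq \max\{\Phi(f;x, \gamma(t)), \Phi(f;\gamma(t), y)\}$ for every $t \in [0,1]$. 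The strict monotonicity of $t \mapsto d_\X(x, \gamma(t))$ from Definition~\ref{def: gamma curve} supplies a $t_*$ with $d_\X(x, \gamma(t_*)) = d_\X(x, \partial V)$; the first sub-pair $(x, \gamma(t_*))$ then lies in the first case and is controlled by $\Psi(f;V;\alpha)$. I would iterate the same analysis on the remaining sub-pair $(\gamma(t_*), y)$; using properness of $(\X, d_\X)$ for compactness, the iteration either terminates at a collection of sub-pairs all admissible for $\Psi(f;V;\alpha)$, or it accumulates at a pair on $\partial V_\alpha$, producing the bound $\Phi(f;\partial V_\alpha)$.

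For $(\ref{Psi2})$, the direction $\leq$ follows from $(\ref{Psi1})$ in the limit $\alpha \downarrow 0$. Indeed $V = \bigcup_{\alpha > 0} V_\alpha$ since $V$ is open, whence $\Phi(f;V_\alpha) \uparrow \Phi(f;V)$; moreover $\Psi(f;V;\alpha) \leq \Psi(f;V;0)$ by monotonicity of the defining condition in $\alpha$; and $\Phi(f;\partial V_\alpha) \to \Phi(f;\partial V)$ by approximating each pair of points in $\partial V$ by their nearest points on $\partial V_\alpha$ and invoking $(P_4)$ together with $(P_5)$. The reverse inequality splits into $\Psi(f;V;0) \leq \Phi(f;V)$ and $\Phi(f;\partial V) \leq \Phi(f;V)$; each follows by approximating an admissible pair on the left by a pair drawn entirely from $V \times V$ and using the same continuity.

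The principal obstacle is the third case of $(\ref{Psi1})$: controlling the iterative use of Chasles' inequality. One must check that each successive ball $\overline{B}_{1/2}(\cdot, \cdot)$ remains inside $\dom(f)$ (so that $(P_3)$ continues to apply), that the distances of the successive sub-pairs contract sufficiently, and that when the recursion fails to resolve into the first two cases it genuinely converges to a pair on $\partial V_\alpha$. The geodesic structure and ball convexity of $(\X, d_\X)$, combined with the continuity of $\Phi$, should be enough to push the argument through, but the bookkeeping is delicate.
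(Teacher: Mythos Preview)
Your third case in the proof of \eqref{Psi1} has a genuine gap. After one application of \ref{P3}, the intermediate point $z_1=\gamma(t_*)$ need not lie in $V_\alpha$: nothing guarantees $d_{\X}(z_1,\partial V)\geq\alpha$. Hence the trichotomy you set up for pairs in $V_\alpha\times V_\alpha$ cannot be re-applied to $(z_1,y)$, and the iteration is not well posed. The fallback you propose --- accumulation at a pair on $\partial V_\alpha$ --- is also unjustified: nothing forces the successive sub-pairs toward $\partial V_\alpha$, and each fresh invocation of \ref{P3} requires a new half-ball $\overline{B}_{1/2}(\cdot,\cdot)\subset\dom(f)$, which you have not checked. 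The ``contraction'' you hope for does not come from the hypotheses.

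The paper avoids any iteration by an extremal argument. For fixed $x\in\mathring{V}_\alpha$ it sets $r_x=d_{\X}(x,\partial V)$, considers $M(x)=\sup\{\Phi(f;x,y):y\in\overline{V_\alpha\setminus B(x;r_x)}\}$, and uses compactness to select a maximizer $y_0$ that is \emph{closest} to $x$. A \emph{single} application of \ref{P3} to the pair $(x,y_0)$ yields a point $m$ on the curve with $d_{\X}(x,m)<d_{\X}(x,y_0)$; minimality of $y_0$ forces $\Phi(f;x,m)<M(x)$, whence the Chasles maximum must be $\Phi(f;m,y_0)$, i.e.\ $\Phi(f;x,y_0)\leq\Phi(f;m,y_0)$. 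Choosing $m$ inside the maximal ball $B(y_0;r_{y_0})\subset V$ makes $(y_0,m)$ admissible for $\Psi$; when $y_0\in\partial V_\alpha$, one instead finds $m'\in\gamma\cap\partial V_\alpha$ and bounds by $\Phi(f;\partial V_\alpha)$. This one-shot extremal step is the missing idea your iteration tries to replace. Your plan for \eqref{Psi2} is close in spirit to the paper's limit argument, though the claim $\Phi(f;\partial V_\alpha)\to\Phi(f;\partial V)$ needs more care than you indicate; the paper instead shows directly that $\Phi(f;V_\alpha)\to\Phi(f;V)$ via \ref{P4}.
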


\begin{proof}
See Appendix \ref{sec: proof of alpha to zero}.
\end{proof}

Proposition \ref{prop: alpha to zero}, along with the discussion
immediately preceding it, seems to indicate that if one were able to
pass through the various
limits to obtain $U(f,\rho,N_0,\alpha,\sigma_0) \rightarrow U(f)$ as $\rho
\rightarrow 0$, $N_0 \rightarrow \infty$, $\alpha \rightarrow 0$, and
$\sigma_0 \rightarrow 0$, then one would have a general theorem of
existence of AMLEs for suitable pairs of metric spaces and Lipschitz-type
functionals. Whether such a procedure is in fact possible, though, is
yet to be determined.
 
The proof of Theorem \ref{Main.theo1} is given in
Section \ref{sec: proof of sigma0 quasi AMLE}, with the relevant
lemmas stated and proved in Section \ref{sec: lemmas for sigma0 AMLE
  theorem}. The main ideas of the proof are as follows. Using \ref{P2},
we can find a minimal extension $U_0 \in \F_{\Phi}(X,Z)$ of
$f$ with $\dom(U_0) = X$. If such an extension also satisfies \eqref{eqn: quasi-amle
  condition}, then we take $U = U_0$ and we are finished. If, on
the other hand, $U_0$ does not satisfy \eqref{eqn: quasi-amle
  condition}, then there must be some $\Omega_1 \in \OO(\rho,N_0)$,
$\Omega_1 \subset X \setminus \dom(f)$, for which
$\Psi(U_0;\Omega_1;\alpha) - \Phi(U_0;\partial\Omega_1) \geq
\sigma_0$. We derive a new minimal extension $U_1 \in
\F_{\Phi}(X,Z)$ of $f$ from $U_0$ by correcting $U_0$ on
$\Omega_1$. To perform the correction, we restrict ourselves to
$U_0|_{\partial \Omega_1}$, and extend this function to $\Omega_1$
using once again \ref{P2}. We then patch this extension into $U_0$,
giving us $U_1$. We then ask if $U_1$ satisfies \eqref{eqn: quasi-amle
  condition}. If it does, we take $U = U_1$ and we are
finished. If it does not, we repeat the procedure just outlined. The
main work of the proof goes into showing that the repetition of
such a procedure must end after a finite number of iterations. 

It is also interesting to note that the extension procedure itself is
a ``black box.'' We do not have any knowledge of the behavior of the
extension outside of \ref{P0}-\ref{P5}, only that it exists. We then refine this extension by using
local extensions to correct in areas that do not satisfy the
quasi-AMLE condition. The proof then is not about the extension of
functions, but rather the refinement of such extensions.

\begin{remark}
The procedure outlined above seems to indicate that the proof can be
adapted numerically for applications in which one needs to compute a generalized
AMLE to within some tolerance. Indeed, if one is able to numerically
compute the corrections efficiently, then by the proof, one is
guaranteed to have an algorithm with finite stopping time for any set
of prescribed tolerances $\rho$, $N_0$, $\alpha$, and $\sigma_0$.
\end{remark}

\section{Proof of Theorem \ref{Main.theo1}: Existence of
  quasi-AMLE's} \label{sec: proof of sigma0 quasi AMLE}

In this section we outline the key parts of the proof of Theorem
\ref{Main.theo1}. We begin by defining a local correction operator that we will use
repeatedly. 

\subsection{Definition of the correction operator $H$} \label{sec: def of H}

\begin{definition}\label{Def.H}
Let $X \in \K(\X)$, $f \in \F_{\Phi}(X,Z)$, and $\Omega \in \OO(\rho,N_0)$ with
$\overline{\Omega} \subset \dom(f)$. By \ref{P2} there exists an $F \in
\F_{\Phi}(X,Z)$ with $\dom(F) = \overline{\Omega}$ such that 
\begin{align}
&F(x) = f(x), \text{ for all } x \in \partial \Omega, \nonumber \\
&\Phi(F;\Omega) = \Phi(f;\partial \Omega). \label{H.2}
\end{align}
Given such an $f$ and $\Omega$, define the operator $H$ as:
\begin{equation}\label{H.1}
 H(f;\Omega)(x) \triangleq F(x), \quad \text{for all } x \in \Omega.
\end{equation}
\end{definition}

\subsection{A sequence of total, minimal extensions} \label{sec: sequence of
  min extensions}

Fix the metric spaces $(\X,d_{\X})$ and $(Z,d_Z)$, the Lipschitz functional
$\Phi$, the compact domain $X \in \K(\X)$, as well as $f \in
\F_{\Phi}(X,Z)$, $\rho > 0$, $N_0 \in \N$, $\alpha > 0$, $\sigma_0 >
0$. Set:
\begin{equation*}
K \triangleq \Phi(f;\dom(f)).
\end{equation*}

Using \ref{P2}, let $U_0 \in \F_{\Phi}(X,Z)$ be a minimal
extension of $f$ to all of $X$; recall that this means:
\begin{align*}
&\dom(U_0) = X, \\
&U_0(x) = f(x), \text{ for all } x \in \dom(f), \\
&\Phi(U_0;X) = \Phi(f;\dom(f)).
\end{align*}
We are going to recursively construct a sequence $\{U_n\}_{n \in \N}$
of minimal extensions of $f$ to $X$. First, for any $n \in \N$, define
\begin{equation*}
\Delta_n \triangleq \{ \Omega \in \OO(\rho,N_0) \mid
\Psi(U_n;\Omega;\alpha) - \Phi(U_n;\partial \Omega) \geq \sigma_0,
\enspace \Omega \subset X \setminus \dom(f) \}.
\end{equation*}
The set $\Delta_n$ contains all admissible open sets for
which the extension $U_n$ violates the quasi-AMLE condition. If $\Delta_n =
\emptyset$, then we can take $U = U_n$ and we are finished.

If, on the other hand, $\Delta_n \neq \emptyset$, then to obtain
$U_{n+1}$ we take $U_n$ and pick any $\Omega_{n+1} \in \Delta_n$ and
set
\begin{equation*}
U_{n+1}(x) \triangleq \left\{
\begin{array}{ll}
H(U_n;\Omega_{n+1})(x), & \text{if } x \in \Omega_{n+1}, \\
U_n(x) & \text{if } x \in X \setminus \Omega_{n+1},
\end{array}
\right.
\end{equation*}
where $H$ was defined in Section \ref{sec: def of H}. Thus, along with
$\{ U_n \}_{n \in \N}$, we also have a sequence of refining sets $\{
\Omega_n \}_{n \in \N^*}$ such that $\Omega_n \in \OO(\rho,N_0)$,
$\Omega_n \subset X \setminus \dom(f)$, and $\Omega_n \in
\Delta_{n-1}$ for all $n \in \N^*$.

Since $\dom(U_0) = X$, and since $\Omega_n \subset X
\setminus \dom(f)$, we see by construction
that $\dom(U_n) = X$ for all $n \in \N$. By the arguments in
Section \ref{sec: H preserves Lip} and Lemma \ref{douanes_lem_0}
contained within, we see that each of the functions $U_n$ is also a
minimal extension of $f$.

\subsection{Reducing the Lipschitz constant on the refining sets
  $\{\Omega_n\}_{n \in \N^*}$}

Since each of the functions $U_n$ is a minimal extension of $f
\in \F_{\Phi}(X,Z)$, we have
\begin{equation} \label{eqn: lip const of Un}
\Phi(U_n;X) = K, \quad \text{for all } n \in \N.
\end{equation}
Furthermore, since $\Omega_{n+1} \in \Delta_n$, we have by definition,
\begin{equation} \label{eqn: Delta def rewrite}
\Psi(U_n;\Omega_{n+1};\alpha) - \Phi(U_n;\partial \Omega_{n+1}) \geq \sigma_0.
\end{equation}
Using the definition of the operator $H$ and \eqref{eqn: Delta def
  rewrite}, we also have for any $n \in \N^*$,
\begin{equation} \label{Main_3}
\Phi(U_n;\Omega_n) = \Phi(H(U_{n-1};\Omega_n);\Omega_n) =
\Phi(U_{n-1};\partial \Omega_n) \leq \Psi(U_{n-1};\Omega_n;\alpha) - \sigma_0.
\end{equation}
Furthermore, combining \eqref{eqn: lip const of Un} and
\eqref{Main_3}, and using property \ref{P1} as well as the definition
of $\Psi$, one can arrive at the following:
\begin{equation} \label{Main_4}
\Phi(U_n; \Omega_n) \leq K - \sigma_0, \quad \text{for all } n \in \N^*.
\end{equation}
Thus we see that locally on $\Omega_n$, the total, minimal extension
$U_n$ is guaranteed to have Lipschitz constant bounded by
$K-\sigma_0$. In fact we can say much more.

\begin{lemma} \label{lem: Qp}
The following property holds true for all $p \in \N^*$:
\begin{equation*}\label{eqn: Qp}
\exists \, M_p \in \N^*: \forall \, n > M_p, \quad
\Phi(U_n;\Omega_n) < K - p \frac{\sigma_0}{2}.
\tag{$\Q_p$}\end{equation*}
\end{lemma}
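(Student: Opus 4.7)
We proceed by induction on $p$. The base case $p = 1$ is immediate from \eqref{Main_4}: for every $n \in \N^*$ one has $\Phi(U_n; \Omega_n) \leq K - \sigma_0 < K - \sigma_0/2$, so $M_1 = 1$ suffices.

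For the inductive step, assume $(Q_p)$ holds with threshold $M_p$ and suppose toward contradiction that $(Q_{p+1})$ fails: there is an infinite set $S$ of indices $n > M_p$ with $\Phi(U_n; \Omega_n) \geq K - (p+1)\sigma_0/2$. Using \eqref{Main_3}, for every $n \in S$,
\begin{equation*}
\Psi(U_{n-1}; \Omega_n; \alpha) \geq \Phi(U_{n-1}; \partial \Omega_n) + \sigma_0 = \Phi(U_n; \Omega_n) + \sigma_0 \geq K - (p-1)\sigma_0/2.
\end{equation*}
By the definition of $\Psi$, one can therefore select $(x_n, y_n) \in \Omega_n \times \Omega_n$ satisfying $B(x_n; d_{\X}(x_n, y_n)) \subset \Omega_n$, $d_{\X}(x_n, \partial \Omega_n) \geq \alpha$, and $\Phi(U_{n-1}; x_n, y_n) > K - p\sigma_0/2$ (for instance, within $\sigma_0/4$ of the supremum).

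The plan from here is to apply Chasles' inequality \ref{P3} to each pair $(x_n, y_n)$; this is legitimate because $\overline{B}_{1/2}(x_n, y_n) \subset \overline{B}(x_n; d_{\X}(x_n, y_n)) \subset \overline{\Omega}_n \subset X = \dom(U_{n-1})$. Chasles furnishes a monotone curve $\gamma_n \in \Gamma(x_n, y_n)$ inside $\overline{B}_{1/2}(x_n, y_n)$ such that, for every $t \in [0,1]$, at least one of $\Phi(U_{n-1}; x_n, \gamma_n(t))$ or $\Phi(U_{n-1}; \gamma_n(t), y_n)$ is at least $\Phi(U_{n-1}; x_n, y_n) > K - p\sigma_0/2$. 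Exploiting compactness of $X$, the a priori bounds $N \leq N_0$ and $r \geq \rho$ on the balls forming each $\Omega_n \in \OO(\rho, N_0)$, the continuity \ref{P4} of $\Phi$, and \eqref{Main_3}, one extracts converging subsequences of $x_n$, $y_n$, well-chosen intermediate points $\gamma_n(t_n)$, and the underlying ball families in $\RR(\Omega_n)$, and chases the resulting high-Lipschitz subpair through the iteration. The aim is to exhibit, at some index $m > M_p$, a pair qualifying in the supremum defining $\Psi(U_{m-1}; \Omega_m; \alpha)$ which forces $\Phi(U_m; \Omega_m) \geq K - p\sigma_0/2$ via \eqref{Main_3}, in direct contradiction with the inductive hypothesis $(Q_p)$.

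The principal technical obstacle is to bridge $U_{n-1}$ (against which the pair $(x_n, y_n)$ is measured) and $U_n$ (which the inductive hypothesis controls): since these functions may differ on all of $\Omega_n$, the Chasles subpairs $(x_n, \gamma_n(t))$ and $(\gamma_n(t), y_n)$ must be carefully tracked through the correction operator $H$, and the geometric constraints $B(\cdot; d_{\X}(\cdot, \cdot)) \subset \Omega$ and $d_{\X}(\cdot, \partial \Omega) \geq \alpha$ must be reestablished at some later step so that the $\Psi$-supremum for a later $\Omega_m$ absorbs the propagated large Lipschitz value and the chain of inequalities closes.
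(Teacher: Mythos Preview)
Your base case is correct and matches the paper. The inductive step, however, is not a proof but a strategy outline, and you yourself flag the gap: you never resolve what you call ``the principal technical obstacle.'' The compactness/subsequence plan you sketch has no clear mechanism to close the argument. Extracting limits of the centers and radii of balls in $\RR(\Omega_n)$ gives a limit set $\Omega_\infty$, but there is no limit function $U_\infty$ to which $\Phi$ can be applied, and nothing in \ref{P0}--\ref{P5} lets you pass $\Phi$-values through such a limit. More fundamentally, a large value $\Phi(U_{n-1};x_n,y_n)$ says nothing about $\Phi(U_m;\cdot,\cdot)$ for $m \geq n$, because the correction on $\Omega_n$ may overwrite both endpoints; your proposal to ``reestablish'' the constraints $B(\cdot;d_{\X}(\cdot,\cdot)) \subset \Omega_m$ and $d_{\X}(\cdot,\partial\Omega_m) \geq \alpha$ at some later $m$ is exactly the hard part, and you do not indicate how to do it.

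The paper's argument is direct rather than by contradiction, and rests on two lemmas you do not invoke. The \emph{Customs Lemma} (Lemma~\ref{lem: customs lemma}) says: if $\Phi(U_j;\Omega_j) \leq C$ for every $j = 1,\ldots,n$, then $\Phi(U_n;x,y) \leq C$ whenever $B(x;d_{\X}(x,y)) \subset \bigcup_{j=1}^{n} \Omega_j$. This is precisely the bridge from $U_{n-1}$-values to the inductive bound, because the $\Psi$-pair $(x,y)$ for $\Omega_n$ automatically satisfies $B(x;d_{\X}(x,y)) \subset \Omega_n \subset \A_{p,\infty} \triangleq \bigcup_{m > M_{p-1}} \Omega_m$. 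The remaining issue is that the Customs Lemma needs a \emph{finite} union; the \emph{Geometrical Lemma} (Lemma~\ref{lem: geometrical lemma}) supplies this by producing $M_p$ so large that any ball of radius $\geq \alpha - \varepsilon$ contained in $\A_{p,\infty}$ has its $\varepsilon$-shrinkage already inside $\A_{p,M_p}$. One then runs the Customs Lemma on $\A_{p,n-1}$ with $C = K - (p-1)\sigma_0/2$ from the inductive hypothesis, handles the $\varepsilon$-slack via \ref{P4}, and concludes $\Psi(U_{n-1};\Omega_n;\alpha) \leq K - (p-1)\sigma_0/2 + C(\varepsilon,\alpha)$; choosing $\varepsilon$ so that $C(\varepsilon,\alpha) \leq \sigma_0/2$ and applying \eqref{Main_3} finishes. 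The Customs Lemma is where the real work lies (its proof uses \ref{P3}, \ref{P4}, \ref{P5} and a delicate nearest-maximizer argument), and it is the missing idea in your proposal.
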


The property \eqref{eqn: Qp} is enough to prove Theorem
\ref{Main.theo1}. Indeed, if $\Delta_n \neq \emptyset$ for all $n \in
\N$, then by \eqref{eqn: Qp} one will have $\Phi(U_n;\Omega_n) < 0$ for $n$ sufficiently
large. However, by the definition of $\Phi$ one must have
$\Phi(U_n;\Omega_n) \geq 0$, and so we have arrived at a
contradiction. Now for the proof of Lemma \ref{lem: Qp}.

\begin{proof}
We prove \eqref{eqn: Qp} by induction. By \eqref{Main_4}, it is clearly true
for $p=1$. Let $p \geq 2$ and suppose that $(\Q_{p-1})$ is true; we
wish to show that \eqref{eqn: Qp} is true as well. Let $M_{p-1}$ be an
integer satisfying $(\Q_{p-1})$ and assume that $\Delta_{M_{p-1}} \neq
\emptyset$. Let us define the following sets:
\begin{align*}
\A_{p,n} &\triangleq \bigcup \{\Omega_m \mid M_{p-1} < m \leq n\}, \\
\A_{p,\infty} &\triangleq \bigcup \{\Omega_m \mid M_{p-1} < m\}, \\
\widetilde{\RR}(\A_{p,\infty}) &\triangleq \{B(x;r) \mid \exists \,
m > M_{p-1} \text{ with } B(x;r) \in \RR(\Omega_m) \}.
\end{align*}

The closure of each set $\A_{p,n}$ is compact and the sequence $\{\A_{p,n}\}_{n >
  M_{p-1}}$ is monotonic under inclusion and converges to
$\A_{p,\infty}$ in Hausdorff distance as $n \rightarrow \infty$. In
particular, for $\varepsilon > 0$, there exists $N_p > M_{p-1}$ such that
\begin{equation*}
\delta (\A_{p,N_p}, \A_{p,\infty}) \leq \varepsilon,
\end{equation*}
where $\delta$ is the Hausdorff distance.

Now apply the Geometrical Lemma \ref{lem: geometrical lemma} (see
Section \ref{sec: geometrical lemma}) to the
sets $\A_{p,n}$ and $\A_{p,\infty}$ with $\beta = \alpha -
\varepsilon$. One obtains $N_{\varepsilon} \in \N$ such that
\begin{equation} \label{eqn: apply geo lemma}
\forall \, B(x;r), \text{ if } r \geq \alpha - \varepsilon \text{
  and } B(x;r) \subset \A_{p,\infty}, \text{ then } B(x;r-\varepsilon)
\subset \A_{p,N_{\varepsilon}}.
\end{equation}
Take $M_p \triangleq \max\{N_p,N_{\varepsilon}\}$. One can then obtain
the following lemma, which is essentially a
corollary of \eqref{eqn: apply geo lemma}.

\begin{lemma}\label{Boule1}
For all $n >M_p$ and for all $B(x;r) \subset \Omega_n$ with
$d_{\X}(x,\partial\Omega_n)\geq \alpha$ and $r < \alpha$, we have
\begin{equation*}
\text{if } B(x;r) \not\subset \A_{p,M_p}, \text{ then }  r \geq \alpha-\varepsilon.
\end{equation*}
\end{lemma}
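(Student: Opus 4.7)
The plan is to argue by contrapositive: fix $n>M_p$ and a ball $B(x;r)\subset\Omega_n$ with $d_{\X}(x,\partial\Omega_n)\geq\alpha$ and $r<\alpha$, and assume $r<\alpha-\varepsilon$; I will show that then $B(x;r)\subset\A_{p,M_p}$, contradicting the hypothesis $B(x;r)\not\subset\A_{p,M_p}$.

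The first step is to enlarge the ball to a radius the Geometrical Lemma can act on. Using $d_{\X}(x,\partial\Omega_n)\geq\alpha$ together with the fact that $\Omega_n$ is open and path connected, a standard boundary-crossing argument shows that the open ball $B(x;\alpha)$ cannot meet $\partial\Omega_n$, whence $B(x;\alpha)\subset\Omega_n$. Since $n>M_p>M_{p-1}$, the set $\Omega_n$ is one of the sets entering the union defining $\A_{p,\infty}$, so
\begin{equation*}
B(x;\alpha)\subset\Omega_n\subset\A_{p,\infty}.
\end{equation*}

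Next I would apply the Geometrical Lemma~\ref{lem: geometrical lemma} (as invoked in \eqref{eqn: apply geo lemma}, with $\beta=\alpha-\varepsilon$) to the ball $B(x;\alpha)$. Since $\alpha\geq\alpha-\varepsilon$ and $B(x;\alpha)\subset\A_{p,\infty}$, the lemma yields
\begin{equation*}
B(x;\alpha-\varepsilon)\subset\A_{p,N_{\varepsilon}}.
\end{equation*}
Because $M_p=\max\{N_p,N_{\varepsilon}\}\geq N_{\varepsilon}$, monotonicity of $\{\A_{p,n}\}$ under inclusion gives $\A_{p,N_{\varepsilon}}\subset\A_{p,M_p}$, so $B(x;\alpha-\varepsilon)\subset\A_{p,M_p}$.

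To finish, under the contrapositive assumption $r<\alpha-\varepsilon$, one has the trivial inclusion $B(x;r)\subset B(x;\alpha-\varepsilon)\subset\A_{p,M_p}$, contradicting the hypothesis that $B(x;r)\not\subset\A_{p,M_p}$. Hence $r\geq\alpha-\varepsilon$, as claimed. There is no real obstacle here: the lemma is essentially a repackaging of \eqref{eqn: apply geo lemma} with the extra geometric input $B(x;\alpha)\subset\Omega_n$ provided by the hypothesis $d_{\X}(x,\partial\Omega_n)\geq\alpha$; the only mild subtlety is being careful to apply the Geometrical Lemma to the enlarged ball $B(x;\alpha)$ rather than to $B(x;r)$ itself, since $r$ may be smaller than the threshold $\alpha-\varepsilon$.
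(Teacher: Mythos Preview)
Your proof is correct and follows essentially the same approach as the paper: enlarge to the ball $B(x;\alpha)\subset\Omega_n\subset\A_{p,\infty}$, apply the Geometrical Lemma (equation \eqref{eqn: apply geo lemma}) to obtain $B(x;\alpha-\varepsilon)\subset\A_{p,M_p}$, and then compare $r$ with $\alpha-\varepsilon$. The only cosmetic difference is that the paper argues directly from the hypothesis $B(x;r)\not\subset\A_{p,M_p}$ rather than by contrapositive, and you are slightly more explicit about using the monotonicity $\A_{p,N_\varepsilon}\subset\A_{p,M_p}$.
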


\begin{proof}
 Let $B(x;r)$ be a ball that satisfies the hypothesis of the lemma and
 suppose $ B(x;r) \not\subset \A_{p,M_p}$. Since $d_{\X}(x, \partial \Omega_n)\geq
 \alpha$ and $B(x;r) \subset B(x;\alpha) \subset \Omega_n$, we have
 $B(x;\alpha) \not\subset \A_{p,M_p}$. On the other hand, $B(x;\alpha)
 \subset \A_{p,\infty}$ and (trivially) $\alpha \geq
 \alpha-\varepsilon$, so by \eqref{eqn: apply geo lemma},
 $B(x;\alpha-\varepsilon) \subset \A_{p,M_p}$. Therefore $r \geq
 \alpha-\varepsilon$. \qed
\end{proof}

Now let us use the inductive hypothesis $(\Q_{p-1})$. Let $n > M_p$
and let $x,y \in \Omega_n$ such that $B(x;d_{\X}(x,y)) \subset \Omega_n$
with $d_{\X}(x,\partial\Omega_n) \geq\alpha$.

\begin{case}
Suppose that $B(x;d_{\X}(x,y)) \subset \A_{p,M_p}$.
In this case we apply the Customs Lemma (Lemma \ref{lem: customs lemma}) with
$\A = \A_{p,n-1}$ (see Section \ref{sec: customs lemma}). Since $n > M_p > M_{p-1}$, we are assured by the inductive hypothesis that 
\begin{equation*}
\Phi(U_j;\Omega_j) < K - (p-1)\frac{\sigma_0}{2}, \quad \text{for all
} j = M_{p-1}+1, \ldots, n-1. 
\end{equation*}
Thus we can conclude from the Customs Lemma that
\begin{equation} \label{eqn: case 1 final upper bound}
\Phi(U_{n-1};x,y) \leq K - (p-1)\frac{\sigma_0}{2}.
\end{equation}
That completes the first case.
\end{case}

\begin{case}
Suppose that  $B(x;d_{\X}(x,y)) \not\subset \A_{p,M_p}$. By Lemma
\ref{Boule1}, we know that $d_{\X}(x,y) \geq \alpha-\varepsilon$. Thus, by
\eqref{eqn: apply geo lemma}, we have $B(x;d_{\X}(x,y)-2\varepsilon)
\subset \A_{p,M_p}$. 

Let $\gamma \in \Gamma(x,y)$ be the curve satisfying \ref{P3} and set
\begin{equation*}
y_1 \triangleq \partial B(x;d_{\X}(x,y)-2\varepsilon) \cap \gamma.
\end{equation*}
Write $\Phi(U_{n-1};x,y)$ in the form:
\begin{equation*}
\Phi(U_{n-1};x,y) = \Phi(U_{n-1};x,y) - \Phi(U_{n-1};x,y_1) + \Phi(U_{n-1};x,y_1).
\end{equation*}
Using \ref{P4} and the fact that $d_{\X}(x,y) \geq \alpha-\varepsilon$,
there exists a constant $C(\varepsilon,\alpha)$ satisfying
$C(\varepsilon,\alpha) \rightarrow 0$ as $\varepsilon \rightarrow 0$
such that
\begin{equation} \label{eqn: bound on small piece}
\Phi(U_{n-1};x,y) - \Phi(U_{n-1};x,y_1) \leq C(\varepsilon,\alpha).
\end{equation}
Moreover, since $B(x;d_{\X}(x,y_1)) \subset \A_{p,M_p}$, we can apply the
Customs Lemma along with the inductive
hypothesis $(\Q_{p-1})$ (as in the first case) to conclude that
\begin{equation} \label{eqn: bound on slightly smaller half ball}
\Phi(U_{n-1};x,y_1) \leq K - (p-1)\frac{\sigma_0}{2}.
\end{equation}
Combining \eqref{eqn: bound on small piece} and \eqref{eqn: bound on
  slightly smaller half ball} we obtain 
\begin{equation*}
\Phi(U_{n-1};x,y) \leq K - (p-1)\frac{\sigma_0}{2} + C(\varepsilon,\alpha).
\end{equation*}
Since we can choose $\varepsilon$ such that $C(\varepsilon,\alpha)
\leq \sigma_0/2$, we have
\begin{equation} \label{eqn: case 2 final upper bound}
\Phi(U_{n-1};x,y) \leq K - p\frac{\sigma_0}{2} + \sigma_0.
\end{equation}
That completes the second case.
\end{case}

Now using \eqref{eqn: case 1 final upper bound} in the first case and
\eqref{eqn: case 2 final upper bound} in the second case we obtain 
\begin{equation} \label{eqn: both cases upper bound}
\Psi(U_{n-1};\Omega_n;\alpha) \leq K - p\frac{\sigma_0}{2} + \sigma_0.
\end{equation}
Combining \eqref{Main_3} with \eqref{eqn: both cases upper bound} we
can complete the proof:
\begin{equation*}
\Phi(U_n;\Omega_n) \leq \Psi(U_{n-1};\Omega_n;\alpha) - \sigma_0 \leq
K - p\frac{\sigma_0}{2}.
\end{equation*}
\qed
\end{proof}

\section{Lemmas used in the proof Theorem
  \ref{Main.theo1}} \label{sec: lemmas for sigma0 AMLE theorem}

\subsection{The operator $H$ preserves the Lipschitz
  constant} \label{sec: H preserves Lip}

In this section we prove that the sequence of extensions $\{U_n\}_{n
  \in \N}$ constructed in Section \ref{sec: sequence of min
  extensions} are all minimal extensions of the original function $f
\in \F_{\Phi}(X,Z)$. Recall that by construction, $U_0$ is a minimal extension of
$f$, and each $U_n$ is an extension of $f$, so it remains to show that
each $U_n$, for $n \in \N^*$, is minimal. In particular, if we show that
the construction preserves or lowers the Lipschitz constant of the
extension from $U_n$ to $U_{n+1}$ then we are finished. The following
lemma does just that.

\begin{lemma}\label{douanes_lem_0}
Let $F_0 \in \F_{\Phi}(X,Z)$ with $\dom(F_0) = X$ and let $\Omega \in
\OO(\rho,N_0)$. Define $F_1 \in \F_{\Phi}(X,Z)$ as:
\begin{equation*}
F_1(x) \triangleq \left\{
\begin{array}{ll}
H(F_0;\Omega)(x), & \text{if } x \in \Omega, \\
F_0(x), & \text{if } x \in X \setminus \Omega.
\end{array}
\right.
\end{equation*}
Then,
\begin{equation*}
\Phi(F_1;X) \leq \Phi(F_0;X).
\end{equation*}
\end{lemma}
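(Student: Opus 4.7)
The plan is to use \ref{P1} to reduce the desired inequality to a pointwise bound: show that for every $x,y \in X$ with $x \neq y$, $\Phi(F_1;x,y) \leq \Phi(F_0;X)$. Since $\Omega$ is open, $\partial\Omega \subset X \setminus \Omega$, and from Definition \ref{Def.H} one has $F_0 = H(F_0;\Omega)$ on $\partial\Omega$; thus $F_1 = F_0$ on $X \setminus \Omega$ and $F_1 = H(F_0;\Omega)$ on all of $\overline{\Omega}$, with the two pieces agreeing on $\partial\Omega$. I would then split $(x,y)$ into three configurations relative to $\Omega$.

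Two of the three configurations are immediate. If $x,y \in X \setminus \Omega$, then $\Phi(F_1;x,y) = \Phi(F_0;x,y) \leq \Phi(F_0;X)$. If $x,y \in \overline{\Omega}$, then by \eqref{H.2} and \ref{P1},
\begin{equation*}
\Phi(F_1;x,y) \leq \Phi(H(F_0;\Omega);\overline{\Omega}) = \Phi(F_0;\partial\Omega) \leq \Phi(F_0;X).
\end{equation*}

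The remaining ``mixed'' case, $x \in \Omega$ and $y \in X \setminus \overline{\Omega}$, is where the real work lies. My plan is to apply Chasles' inequality \ref{P3} to $F_1$ at $(x,y)$ to produce a curve $\gamma \in \Gamma(x,y)$ satisfying
\begin{equation*}
\Phi(F_1;x,y) \leq \inf_{t \in [0,1]} \max\{\Phi(F_1;x,\gamma(t)),\,\Phi(F_1;\gamma(t),y)\},
\end{equation*}
and to exploit continuity of $\gamma$ together with $\gamma(0) \in \Omega$, $\gamma(1) \in X \setminus \overline{\Omega}$ to produce a $t_0 \in (0,1)$ with $z \triangleq \gamma(t_0) \in \partial\Omega \subset X$. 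With this choice the two pure cases already treated give $\Phi(F_1;x,z) \leq \Phi(F_0;X)$ (since $x,z \in \overline{\Omega}$) and $\Phi(F_1;z,y) \leq \Phi(F_0;X)$ (since $z,y \in X \setminus \Omega$), yielding the desired bound on the mixed pair.

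The main obstacle is that \ref{P3} is only guaranteed when $\overline{B}_{1/2}(x,y) \subset \dom(F_1) = X$, which need not hold for an arbitrary compact $X \in \K(\X)$. I would circumvent this by first invoking \ref{P2} on $F_1$ to produce an extension $\widetilde{F_1}$ on the enlarged domain $X \cup \overline{B}_{1/2}(x,y)$ with $\Phi(\widetilde{F_1}; X \cup \overline{B}_{1/2}(x,y)) = \Phi(F_1;X)$, so that \ref{P3} is directly applicable to $\widetilde{F_1}$. The crossing point $z$ always lies in $\partial\Omega \subset X$, where $\widetilde{F_1}$ coincides with $F_1$, hence the pointwise quantities $\Phi(\widetilde{F_1};x,z)$, $\Phi(\widetilde{F_1};z,y)$ and $\Phi(\widetilde{F_1};x,y)$ reduce to the corresponding values for $F_1$, and the argument outlined above goes through unchanged.
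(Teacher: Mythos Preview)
Your proof is correct and follows the same three-case strategy as the paper: reduce to pointwise bounds via \ref{P1}, dispose of the two ``pure'' configurations directly, and in the mixed case invoke \ref{P3} to find a crossing point $z \in \partial\Omega$ that splits $(x,y)$ into two pure pairs.

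The only substantive difference is how you handle the obstruction $\overline{B}_{1/2}(x,y) \nsubseteq X$. You extend $F_1$ directly via \ref{P2}; the paper instead extends $F_0$ to $\widetilde{F}_0$ on a larger compact $\widetilde{X} \supset \overline{B}_{1/2}(x,y)$ and then rebuilds $\widetilde{F}_1$ from $\widetilde{F}_0$ using the same correction operator $H$ on $\Omega$. Both routes arrive at the same crossing-point argument. The paper's variant has the minor logical advantage that \ref{P2} is applied to $F_0$, which is known \emph{a priori} to lie in $\F_{\Phi}(X,Z)$; your invocation of \ref{P2} on $F_1$ formally requires $\Phi(F_1;X) < \infty$, which is part of what the lemma establishes. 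Since the lemma statement already writes ``$F_1 \in \F_{\Phi}(X,Z)$'' this is harmless here, but if you wanted to avoid even the appearance of circularity, extending $F_0$ rather than $F_1$ is the cleaner choice.
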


\setcounter{case}{0}

\begin{proof}
We utilize properties \ref{P1} and \ref{P3}. By \ref{P1}, it is
enough to consider the evaluation of $\Phi(F_1;x,y)$ for an
arbitrary pair of points $x,y \in X$. We have three cases:

\begin{case}
If $x,y \in X \setminus \Omega$, then by the
definition of $F_1$ and \ref{P1} (applied to $F_0$) we have:
\begin{equation*}
\Phi(F_1;x,y) = \Phi(F_0;x,y) \leq \Phi(F_0;X).
\end{equation*}
\end{case}

\begin{case}
If $x,y \in \Omega$, then by the definition of
$F_1$, the definition of $H$, and property \ref{P1}, we have:
\begin{equation*}
\Phi(F_1;x,y) = \Phi(H(F_0;\Omega);x,y) \leq
\Phi(F_0;\partial \Omega) \leq \Phi(F_0; X).
\end{equation*}
\end{case}

\begin{case}
Suppose that $x \in X \setminus \Omega$ and $y \in
\Omega$. Assume, for now, that $\overline{B}_{1/2}(x,y) \subset X$. By \ref{P3}
there exists a curve $\gamma \in \Gamma(x,y)$ such that
\begin{equation*}
\Phi(F_1;x,y) \leq \inf_{t \in [0,1]} \max\{
\Phi(F_1;x,\gamma(t)), \Phi(F_1;\gamma(t),y)\}.
\end{equation*}
Let $t_0 \in [0,1]$ be such that $\gamma(t_0) \in \partial
\Omega$. Then, utilizing \ref{P3}, the definition of $F_1$, the
definition of $H$, and \ref{P1}, one has:
\begin{align*}
\Phi(F_1;x,y) &\leq \max\{\Phi(F_1;x,\gamma(t_0)),
\Phi(F_1;\gamma(t_0),y)\} \\
&= \max\{\Phi(F_0;x,\gamma(t_0)),
\Phi(H(F_0;\Omega);\gamma(t_0),y)\} \\
&\leq \Phi(F_0;X).
\end{align*}
If $\overline{B}_{1/2}(x,y) \nsubseteq X$, then we can replace $X$ by a larger
compact set $\widetilde{X} \subset \X$ that does contain
$\overline{B}_{1/2}(x,y)$. By \ref{P2}, extend $F_0$ to a function
$\widetilde{F}_0$ with $\dom(\widetilde{F}_0) = \widetilde{X}$ such
that
\begin{align*}
&\widetilde{F}_0(x) = F_0(x), \quad \text{for all } x \in X, \\
&\Phi(\widetilde{F}_0;\widetilde{X}) = \Phi(F_0;X).
\end{align*}
Define $\widetilde{F}_1$ analogously to $F_1$:
\begin{equation*}
\widetilde{F}_1(x) \triangleq \left\{
\begin{array}{ll}
H(\widetilde{F}_0;\Omega)(x), & \text{if } x \in \Omega, \\
\widetilde{F}_0(x), & \text{if } x \in \widetilde{X} \setminus \Omega.
\end{array}
\right.
\end{equation*}
Note that $\widetilde{F}_1|_X \equiv F_1$, and furthermore, the
analysis just completed at the beginning of case three applies to
$\widetilde{F}_0$, $\widetilde{F}_1$, and $\widetilde{X}$ since
$\overline{B}_{1/2}(x,y) \subset \widetilde{X}$. Therefore,
\begin{equation*}
\Phi(F_1;x,y) = \Phi(\widetilde{F}_1;x,y) \leq
\Phi(\widetilde{F}_0;\widetilde{X}) = \Phi(F_0;X).
\end{equation*}
\end{case}
\qed
\end{proof}

\subsection{Geometrical Lemma}\label{sec: geometrical lemma}

\begin{lemma} \label{lem: geometrical lemma}
Fix $\rho >0$ and $\beta>0$ with $\beta<\rho$. Let
$\{B(x_n;r_n)\}_{n\in \N}$ be a set of balls contained in $X$. Suppose
that $\forall \, n \in \N$,  $r_n> \rho$. For $N \in \N$, let us define 
\begin{equation*}
\A_N \triangleq \bigcup_{n \leq N}B(x_n;r_n) \quad \text{and} \quad \A_{\infty}
\triangleq \bigcup_{n \in \N}B(x_n;r_n).
\end{equation*}
Then $\forall \, \varepsilon >0$, $\exists \, N_{\varepsilon}\in \N$ such
that $\forall \, B(x;r)$, with $r\geq\beta$ and 
$B(x;r) \subset  \A_{\infty}$, we have
\begin{equation*}
B(x;r-\varepsilon) \subset \A_{N_{\varepsilon}}.
\end{equation*}
\end{lemma}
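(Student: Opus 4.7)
My plan is to argue by contradiction using the compactness of $X$. Suppose the conclusion fails for some $\varepsilon > 0$: then for every $N \in \N$ one can find a ball $B(y_N; s_N)$ with $s_N \geq \beta$, $B(y_N; s_N) \subset \A_\infty$, and a witness point $z_N \in B(y_N; s_N - \varepsilon) \setminus \A_N$. Since all these balls lie in the compact set $X$, the radii $s_N$ are bounded above by $\operatorname{diam}(X)$, so after passing to a subsequence I may assume $y_N \to y^\ast$ in $X$, $z_N \to z^\ast$ in $X$, and $s_N \to s^\ast$ with $s^\ast \geq \beta$.

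The strategy is to show that $z^\ast$ simultaneously lies outside and inside $\A_\infty$. For the first, fix any $M \in \N$: for every $N \geq M$ the monotonicity $\A_M \subset \A_N$ gives $z_N \notin \A_M$; since $\A_M$ is a finite union of open balls it is open, and its complement is closed and therefore contains the subsequential limit $z^\ast$. Letting $M$ vary yields $z^\ast \notin \bigcup_M \A_M = \A_\infty$.

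For the reverse inclusion I would first establish the auxiliary fact $B(y^\ast; s^\ast) \subset \A_\infty$. Given $p \in B(y^\ast; s^\ast)$, write $d_{\X}(p, y^\ast) = s^\ast - \delta$ with $\delta > 0$; choosing $N$ large enough that $d_{\X}(y_N, y^\ast) < \delta/3$ and $|s_N - s^\ast| < \delta/3$, the triangle inequality gives
\begin{equation*}
d_{\X}(p, y_N) \leq d_{\X}(p, y^\ast) + d_{\X}(y^\ast, y_N) < s^\ast - \tfrac{2\delta}{3} < s_N,
\end{equation*}
so $p \in B(y_N; s_N) \subset \A_\infty$. Continuity of the metric then yields $d_{\X}(z^\ast, y^\ast) = \lim_N d_{\X}(z_N, y_N) \leq s^\ast - \varepsilon < s^\ast$, so $z^\ast \in B(y^\ast; s^\ast) \subset \A_\infty$, contradicting the previous paragraph.

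The main (and essentially only) obstacle is the passage to the limit establishing $B(y^\ast; s^\ast) \subset \A_\infty$, where three small errors must be juggled simultaneously: $y_N \to y^\ast$, $s_N \to s^\ast$, and the strict position of $p$ inside $B(y^\ast; s^\ast)$. The remaining ingredients are just standard compactness extraction in $X$ and the observation that each $\A_M$ being open makes its complement closed under limits; the hypothesis $r_n > \rho$ on the generating balls plays no direct role in the contradiction itself beyond ensuring the setup is nontrivial.
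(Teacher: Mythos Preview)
Your proof is correct and takes a genuinely different, more elementary route than the paper's. The paper argues constructively: it introduces the index sets $I_N = \{a : B(a;\beta-2\varepsilon)\subset \A_N\}$, rewrites each $\A_N$ as a union of balls of the fixed small radius $\beta-2\varepsilon$ (this is precisely where $r_n>\rho>\beta$ enters), forms the ``thinned'' sets $\A_N^{\varepsilon/2}=\bigcup_{a\in I_N}B(a;\varepsilon/2)$, and then uses Hausdorff convergence $\A_N^{\varepsilon/2}\to \A_\infty^{\varepsilon/2}$ together with a covering of $B(x;r-\varepsilon)$ by balls of radius $\beta-3\varepsilon$ to push a given point into $\A_{N_\varepsilon}$ via a chain of triangle inequalities. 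Your contradiction-and-compactness argument bypasses all this machinery; in particular, as you note, it shows that the hypotheses $r_n>\rho$ and $r\ge\beta$ are not actually needed. The paper's approach is slightly more constructive (it pins down $N_\varepsilon$ through a Hausdorff-distance threshold), while yours is shorter and strictly more general. One small technical point: the bound $s_N\le \operatorname{diam}(X)$ need not hold literally if $\X$ itself is bounded (then balls of arbitrarily large radius equal all of $\X$). This is harmless---either cap $s_N$ without loss, or avoid the issue entirely by observing that $d_\X(z_N,y_N)<s_N-\varepsilon$ gives $B(z_N;\varepsilon)\subset B(y_N;s_N)\subset \A_\infty$, so once $d_\X(z_N,z^\ast)<\varepsilon$ you get $z^\ast\in \A_\infty$ directly, with no need to pass to limits in $y_N$ or $s_N$ at all.
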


\begin{proof}
Let $\varepsilon >0$.  Let us define for all $N \in \N$,
\begin{equation*}
I_N \triangleq \{ a \mid B(a;\beta-2\varepsilon) \subset \A_N  \} \quad
\text{and} \quad I_{\infty} \triangleq \{b \mid B(b;\beta-2\varepsilon)
\subset \A_{\infty} \}.
\end{equation*}
We remark that $r_n > \rho > \beta-2\varepsilon$ implies that
\begin{equation*}
\A_{N} = \bigcup_{a \in I_N}B(a;\beta-2\varepsilon) \quad \text{and}
\quad \A_{\infty} = \bigcup_{b \in I_{\infty}}B(b;\beta-2\varepsilon).
\end{equation*}
Let us define
\begin{equation*}
\A_N^{\varepsilon/2} \triangleq \bigcup_{a \in
  I_N}B(a;\dfrac{\varepsilon}{2}) \quad \text{and} \quad
\A_{\infty}^{\varepsilon/2} \triangleq \bigcup_{b \in
  I_{\infty}}B(b;\dfrac{\varepsilon}{2}). 
\end{equation*}
The sequence $\{\A_{N}^{\varepsilon/2} \}_{N \in \N}$ is monotonic
under inclusion and converges to $\A_{\infty}^{\varepsilon/2}$ in
Hausdorff distance as $n \rightarrow \infty$. For
$\varepsilon/2>0$ there exists $N_{\varepsilon} \in \N$ such that
\begin{equation}\label{convHaus1}
\delta(\A_{N_{\varepsilon}}^{\varepsilon/2},\A_{\infty}^{\varepsilon/2})\leq \dfrac{\varepsilon}{2}.
\end{equation}
Choose any ball $B(x;r) \subset \A_{\infty}$ with $r \geq \beta$ and define
\begin{equation*}
J(x) \triangleq \{ c \mid B(c;\beta-3\varepsilon) \subset
B(x;r-\varepsilon) \}.
\end{equation*}
We note, similar to earlier, that $r > \beta-2\varepsilon$ implies we have
$B(x;r-\varepsilon) = \bigcup_{c \in J(x)}B(c;\beta-3\varepsilon)$. We will show that
$B(x;r-\varepsilon) \subset \A_{N_{\varepsilon}}$.

Let $y \in B(x;r-\varepsilon)$ and choose $c \in J(x)$ such that $y \in
B(c;\beta-3\varepsilon)$. Since $B(c;\beta-3\varepsilon) \subset
B(x;r-\varepsilon)$ and $B(x;r) \subset \A_{\infty}$ we have
\begin{equation*}
B(c;\beta-2\varepsilon) \subset B(x;r) \subset \A_{\infty}.
\end{equation*}
Thus $c \in I_{\infty}$ and $c \in \A_{\infty}^{\varepsilon/2}$. Since
$c \in \A_{\infty}^{\varepsilon/2}$, using (\ref{convHaus1}), choose $z
\in \A_{N_{\varepsilon}}^{\varepsilon/2}$ which satisfies 
\begin{equation}\label{convHaus2}
d_{\X}(c,z) \leq \dfrac{\varepsilon}{2}.
\end{equation}
Moreover since  $z  \in \A_{N_{\varepsilon}}^{\varepsilon/2}$, choose
$a \in I_{N_{\varepsilon}}$ which satisfies $z \in
B(a;\varepsilon/2)$. We have
\begin{equation}\label{convHaus3}
d_{\X}(z,a) \leq \dfrac{\varepsilon}{2}.
\end{equation}
By \eqref{convHaus2} and \eqref{convHaus3},
\begin{equation}\label{convHaus4}
d_{\X}(c,a) \leq d_{\X}(c,z) + d_{\X}(z,a) \leq
\dfrac{\varepsilon}{2}+\dfrac{\varepsilon}{2} \leq \varepsilon.
\end{equation}
Since $y \in B(c;\beta-3\varepsilon)$ we obtain
\begin{equation}\label{convHaus5}
d_{\X}(y,a) \leq d_{\X}(y,c) + d_{\X}(c,a) \leq \beta-3\varepsilon+\varepsilon \leq
\beta-2\varepsilon.
\end{equation}
Since $a \in I_{N_{\varepsilon}}$ we conclude that $y  \in
\A_{N_{\varepsilon}}$. Therefore $B(x;r-\varepsilon) \subset
\A_{N_{\varepsilon}}$ and the result is proved. \qed
\end{proof}

\subsection{Customs Lemma}\label{sec: customs lemma}

In this section we prove the Customs Lemma, which is vital to the
proof of the property \eqref{eqn: Qp} from Lemma \ref{lem: Qp}. Throughout
this section we shall make use of the construction of the sequence of
extensions $\{U_n\}_{n \in \N}$, which we repeat here.

Let $U_0 \in \F_{\Phi}(X,Z)$ with $\dom(U_0) = X$ and $n \in \N^*$. Set
\begin{equation*}
\Lambda \triangleq \{ \Omega_j \mid \Omega_j \in \OO(\rho,N_0),
\enspace j=1, \ldots, n\},
\end{equation*}
and define:
\begin{equation*}
\A \triangleq \bigcup_{j=1}^n \Omega_j.
\end{equation*}
Let $\{U_j\}_{j=1}^n \subset \F_{\Phi}(X,Z)$ be a collection of functions
defined as:
\begin{equation*}
U_{j+1}(x) \triangleq \left\{
\begin{array}{ll}
H(U_j; \Omega_{j+1})(x), & \text{if } x \in \Omega_{j+1}, \\
U_j(x), & \text{if } x \in X \setminus \Omega_{j+1},
\end{array}
\right. \quad \text{for all } j = 0, \ldots, n-1.
\end{equation*}
We shall need the following lemma first.

\begin{lemma}\label{lem: good zone around x}
Let $x \in \A$. Then there exists $\sigma > 0$ so that $B(x;\sigma)
\subset \A$, and for each $b \in B(x;\sigma)$, there exists $j \in
\{1, \ldots, n\}$ such that $x,b \in \Omega_j$, $U_n(x) = U_j(x)$, and
$U_n(b) = U_j(b)$.
\end{lemma}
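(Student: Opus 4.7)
The plan is to exploit the finiteness of the collection $\{\Omega_1,\ldots,\Omega_n\}$ and the openness of each $\Omega_k$ to pick $\sigma$ small enough that the set of indices ``felt'' by any $b\in B(x;\sigma)$ is essentially the same as the one felt by $x$. Set
\[
K(x):=\{k\in\{1,\ldots,n\}\mid x\in\Omega_k\},\qquad K'(x):=\{k\in\{1,\ldots,n\}\mid x\notin\overline{\Omega_k}\}.
\]
Since $x\in\A$, the set $K(x)$ is non-empty. Each $\Omega_k$ is open, so $d_{\X}(x,\X\setminus\Omega_k)>0$ for every $k\in K(x)$ and $d_{\X}(x,\Omega_k)>0$ for every $k\in K'(x)$. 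As there are only finitely many indices, taking $\sigma>0$ strictly smaller than every one of these distances yields $B(x;\sigma)\subset\bigcap_{k\in K(x)}\Omega_k\subset\A$ and $B(x;\sigma)\cap\Omega_k=\emptyset$ for every $k\in K'(x)$.

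For any $b\in B(x;\sigma)$ the set $K(b):=\{k:b\in\Omega_k\}$ then satisfies $K(x)\subseteq K(b)$ and $K(b)\cap K'(x)=\emptyset$, so $K(b)\setminus K(x)\subseteq\{k:x\in\partial\Omega_k\}$. My first candidate is $j:=\max K(x)$, so that $x,b\in\Omega_j$ holds automatically. For any $k>j$ one has $k\notin K(x)$, hence the recursive definition of $\{U_m\}_{m\in\N}$ from Section \ref{sec: sequence of min extensions} forces $U_k(x)=U_{k-1}(x)$; iterating gives $U_n(x)=U_j(x)$. The analogous identity $U_n(b)=U_j(b)$ reduces to showing $b\notin\Omega_k$ for every $k>j$, which by the choice of $\sigma$ is automatic for $k\in K'(x)$.

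The principal obstacle is therefore the residual set of indices $k>j$ with $x\in\partial\Omega_k$: for such $k$, no purely geometric shrinkage of $\sigma$ can push $b$ out of $\Omega_k$. To handle this case I plan to exchange the candidate $j=\max K(x)$ for $j=\max K(b)$, which automatically delivers $b\in\Omega_j$ and $U_n(b)=U_j(b)$, and then to verify $x\in\Omega_j$ and $U_n(x)=U_j(x)$ by induction over the finitely many problematic indices. The crucial inputs are the boundary identity $H(U_{k-1};\Omega_k)(y)=U_{k-1}(y)$ for $y\in\partial\Omega_k$ built into Definition \ref{Def.H}, together with continuity of $H(U_{k-1};\Omega_k)$ on $\overline{\Omega_k}$ guaranteed by Property \ref{P5}, which let the corrections at these boundary indices collapse into a single effective step. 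The hard part of the argument is precisely this bookkeeping over indices $k$ with $x\in\partial\Omega_k$; everything else is a direct consequence of finiteness and the openness of the $\Omega_k$.
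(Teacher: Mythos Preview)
Your setup through the first two paragraphs is clean and matches the paper's initial reduction: pick $\sigma$ so that $B(x;\sigma)$ sits inside every $\Omega_k$ with $k\in K(x)$ and avoids every $\Omega_k$ with $k\in K'(x)$; then $j=\max K(x)$ handles every $b$ for which $K(b)=K(x)$. You also correctly isolate the obstruction: indices $k$ with $x\in\partial\Omega_k$.

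The gap is in your treatment of that obstruction. Switching to $j=\max K(b)$ does give $b\in\Omega_j$ and $U_n(b)=U_j(b)$, but when $\max K(b)$ is one of the boundary indices you have $x\in\partial\Omega_j$, hence $x\notin\Omega_j$. This is not something an induction can ``verify''; it is simply false, and no boundary-identity or continuity bookkeeping converts a boundary point into an interior point. Nor can you step down to a smaller index $j'\in K(x)$ and keep $U_n(b)=U_{j'}(b)$, because $b$ still lies in $\Omega_{\max K(b)}$ and the correction there may well have moved the value. The boundary identity $H(U_{k-1};\Omega_k)|_{\partial\Omega_k}=U_{k-1}|_{\partial\Omega_k}$ together with continuity gives only $U_k(b)\to U_{k-1}(x)$ as $b\to x$, not $U_k(b)=U_{k-1}(b)$; so the tools you list do not close the argument in the direction you propose.

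The paper's argument is genuinely different and avoids ever naming an explicit $j$ for a given $b$. It introduces the set $J=\{j\in K(x):U_n(x)=U_j(x)\}$ of admissible indices at $x$ (nonempty since $\max K(x)\in J$) and the set
\[
\Sigma=\{b\in B(x;\eta):\exists\,j\in J\text{ with }b\in\Omega_j\text{ and }U_n(b)=U_j(b)\}.
\]
The goal becomes $x\in\mathring\Sigma$, proved by contradiction: if $z_k\to x$ with $z_k\notin\Sigma$, pass to a subsequence along which the index $j_k:=\max\{j:z_k\in\Omega_j\}$ is a constant $i_0$; then $U_n(z_k)=U_{i_0}(z_k)$ for all $k$, and continuity (Property \ref{P5}) forces $U_n(x)=U_{i_0}(x)$, which places $i_0$ in $J$ and hence $z_k\in\Sigma$ --- a contradiction. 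The point is that continuity is used not to compare $U_k(b)$ with $U_{k-1}(b)$ along the recursion, but to \emph{enlarge} $J$ a posteriori, manufacturing the admissible index that your direct approach cannot produce. That is the idea your sketch is missing.
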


\begin{proof}
To begin, set
\begin{equation*}
\eta_1 \triangleq \sup\{r > 0 \mid B(x;r) \subset \A\},
\end{equation*}
noting that $\A$ is open and so $\eta_1 > 0$. Define the following two
sets of indices:
\begin{align*}
I^+ &\triangleq \{j \in \{1, \ldots, n\} \mid x \in \Omega_j\}, \\
I^- &\triangleq \{j \in \{1, \ldots, n\} \mid x \notin \Omega_j\}.
\end{align*}
The set $I^+$ is nonempty since $x \in \A$. So we can additionally define
\begin{equation*}
j^+ \triangleq \max_{j \in I^+} j.
\end{equation*}
On the other hand, $I^-$ may be empty. If it is not, then we define
$\ell_j \triangleq d_{\X}(x,\Omega_j)$ for each $j \in I^-$, and set
\begin{equation*}
\eta_2 \triangleq \frac{1}{2}\min\{\ell_j \mid j \in I^-\}.
\end{equation*}
Finally, we take $\eta$ to be:
\begin{equation*}
\eta \triangleq \left\{
\begin{array}{ll}
\min\{\eta_1,\eta_2\}, & \text{if } I^- \neq \emptyset, \\
\eta_1, & \text{if } I^- = \emptyset.
\end{array}
\right.
\end{equation*}
Note that $\eta > 0$; we also have:
\begin{equation} \label{eqn: eta ball remark 1}
B(x;\eta) \cap \bigcup_{j \in I^-} \Omega_j = \emptyset \qquad
\text{and} \qquad B(x;\eta) \subset \bigcup_{j \in I^+} \Omega_j.
\end{equation}
Now let
\begin{equation*}
J \triangleq \{j \in I^+ \mid U_n(x) = U_j(x)\}.
\end{equation*}
Clearly $j^+ \in J$, and so this set is nonempty. We use it to define
the following: 
\begin{equation*}
\Sigma \triangleq \{b \in B(x;\eta) \mid \exists \thinspace j \in J:
U_n(b) = U_j(b), \enspace b \in \Omega_j\}.
\end{equation*}
The set $\Sigma$ is nonempty since $B(x;\eta) \cap \Omega_{j^+}
\subset \Sigma$. 

To prove the lemma, it is enough to show that $x \in
\mathring{\Sigma}$. Indeed, if $x \in \mathring{\Sigma}$, then there
exists a $\sigma > 0$ such that $B(x;\sigma) \subset
\mathring{\Sigma}$. Then for each $b \in B(x;\sigma)$, there exists $j
\in J$ such that $U_n(b) = U_j(b)$ (by the definition of $\Sigma$) and
$U_n(x) = U_j(x)$ (by the definition of $J$).

We prove that $x \in \mathring{\Sigma}$ by contradiction. Suppose that
$x \notin \mathring{\Sigma}$. Let $\{z_k\}_{k \in \N}$ be a sequence
which converges to $x$ that satisfies the following property: 
\begin{equation*}
\forall \thinspace k \in \N, \enspace z_k \notin \Sigma \text{ and } z_k \in B(x;\eta).
\end{equation*}
Define:
\begin{equation*}
I_k^+ \triangleq \{j \in I^+ \mid z_k \in \Omega_j\}, \quad \text{for
  all } k \in \N. 
\end{equation*}
By the remark given in \eqref{eqn: eta ball remark 1} we see that
$I_k^+$ is nonempty for each $k \in \N$. Thus we can define 
\begin{equation*}
j_k \triangleq \max_{j \in I_k} j.
\end{equation*}
Since $I^+ \setminus J$ has a finite number of elements, there exists
$i_0 \in I^+ \setminus J$ and a subsequence $\{z_{\phi(k)}\}_{k \in
  \N} \subset \{z_k\}_{k \in \N}$ that converges to $x$ such that 
\begin{equation*}
\forall \thinspace k \in \N, \enspace j_{\phi(k)} = i_0.
\end{equation*}
By the definition of $I_k^+$ and using the fact that $i_0$ is the
largest element of $I_{\phi(k)}^+$ for each $k \in \N$, we have 
\begin{equation*}
\forall \thinspace k \in \N, \enspace U_n(z_{\phi(k)}) =
U_{i_0}(z_{\phi(k)}) \text{ and } z_{\phi(k)} \in \Omega_{i_0}. 
\end{equation*}
Since the functions $U_j$ are continuous by \ref{P5}, we have
\begin{equation*}
\lim_{k \rightarrow \infty} U_n(z_{\phi(k)}) = U_n(x),
\end{equation*}
and
\begin{equation*}
\lim_{k \rightarrow \infty} U_{i_0}(z_{\phi(k)}) = U_{i_0}(x).
\end{equation*}
Thus
\begin{equation*}
U_n(x) = U_{i_0}(x).
\end{equation*}
But then $i_0 \in J$, which in turn implies that $z_{\phi(k)} \in
\Sigma$ for all $k \in \N$. Thus we have a contradiction, and so $x
\in \mathring{\Sigma}$. \qed
\end{proof}

\begin{lemma}[Customs Lemma] \label{lem: customs lemma}
If there exists some constant $C \geq 0$ such that
\begin{equation*}
\Phi(U_j; \Omega_j) \leq C, \quad \text{for all } j = 1, \ldots, n,
\end{equation*}
then for all $x,y \in \A$ with $B(x;d_{\X}(x,y))  \subset \A$,
\begin{equation*}
\Phi(U_n;x,y) \leq C.
\end{equation*}
\end{lemma}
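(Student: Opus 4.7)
The plan is to apply Chasles' inequality \ref{P3} once to the pair $(x,y)$, obtaining a curve $\gamma \in \Gamma(x,y)$, and then to use the good-zone lemma (Lemma~\ref{lem: good zone around x}) along $\gamma([0,1])$ together with a continuity argument on $[0,1]$ to propagate the local bound $\Phi(U_n;\cdot,\cdot) \le C$ from $x$ out to $y$.

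First I would verify that \ref{P3} applies to $(x,y)$: since $B(x;d_{\X}(x,y)) \subset \A \subset X$ and $X$ is closed, $\overline{B}_{1/2}(x,y) \subset \overline{B(x;d_{\X}(x,y))} \subset X = \dom(U_n)$. The resulting curve $\gamma$ satisfies
\begin{equation*}
\Phi(U_n;x,y) \le \max\bigl\{\Phi(U_n;x,\gamma(t)),\, \Phi(U_n;\gamma(t),y)\bigr\}, \quad \forall\, t \in [0,1],
\end{equation*}
and by strict monotonicity of $\gamma$ together with the hypothesis $y \in \A$, the image $\gamma([0,1])$ is a compact subset of $\A$. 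At each $\gamma(t)$, Lemma~\ref{lem: good zone around x} furnishes a radius $\sigma(t) > 0$ such that for every $b \in B(\gamma(t);\sigma(t))$ there is an index $j$ with $\gamma(t),b \in \Omega_j$, $U_n(\gamma(t)) = U_j(\gamma(t))$, and $U_n(b) = U_j(b)$; combined with $\Phi(U_j;\Omega_j) \le C$, this yields the local bound $\Phi(U_n;\gamma(t),b) \le C$.

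To globalize, I would introduce the closed set
\begin{equation*}
T \triangleq \{t \in [0,1] : \Phi(U_n;x,\gamma(s)) \le C \text{ for every } s \in (0,t]\},
\end{equation*}
with $0 \in T$ by convention. Property \ref{P4} makes $t \mapsto \Phi(U_n;x,\gamma(t))$ continuous on $(0,1]$, so $T = [0,t^*]$; Lemma~\ref{lem: good zone around x} at $x$ gives $t^* > 0$. The aim is $t^* = 1$, from which taking $s=1$ and using continuity yields $\Phi(U_n;x,y) \le C$. Supposing $t^* < 1$ for contradiction, continuity and the maximality of $t^*$ force $\Phi(U_n;x,\gamma(t^*)) = C$, while Lemma~\ref{lem: good zone around x} at $\gamma(t^*)$ produces $\Phi(U_n;\gamma(t^*),\gamma(t)) \le C$ for $t$ in a right-neighborhood of $t^*$.

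The \emph{main obstacle} is then the propagation step: combining $\Phi(U_n;x,\gamma(t^*)) \le C$ and $\Phi(U_n;\gamma(t^*),\gamma(t)) \le C$ into $\Phi(U_n;x,\gamma(t)) \le C$ for $t$ just past $t^*$, thereby contradicting the maximality of $t^*$. Because $\Phi$ has no direct triangle inequality and the Chasles curve $\eta \in \Gamma(x,\gamma(t))$ produced by \ref{P3} need not pass through $\gamma(t^*)$, one must locate a splitting parameter $s^* \in [0,1]$ on $\eta$ at which both $\Phi(U_n;x,\eta(s^*)) \le C$ and $\Phi(U_n;\eta(s^*),\gamma(t)) \le C$ hold simultaneously. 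I expect to secure such an $s^*$ by a topological/connectedness argument on $[0,1]$ that exploits the good zones at $x$ and at $\gamma(t)$ at the two ends of $\eta$, combined with a compactness/finite-cover extraction of uniform good-zone radii along $\eta([0,1])$; once $s^*$ is found, Chasles at $(x,\gamma(t))$ closes the argument.
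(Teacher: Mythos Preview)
Your outline has a genuine gap at exactly the point you flag as the ``main obstacle,'' and the fix you sketch does not close it. From $\Phi(U_n;x,\gamma(t^*)) \le C$ and $\Phi(U_n;\gamma(t^*),\gamma(t)) \le C$ nothing in \ref{P0}--\ref{P5} yields $\Phi(U_n;x,\gamma(t)) \le C$; $\Phi$ has no triangle inequality. Passing to a fresh Chasles curve $\eta \in \Gamma(x,\gamma(t))$ and looking for a parameter $s^*$ with both $\Phi(U_n;x,\eta(s^*)) \le C$ and $\Phi(U_n;\eta(s^*),\gamma(t)) \le C$ is circular: the first of these two inequalities is precisely an instance of the statement you are trying to prove (with $\eta(s^*)$ in the role of $y$). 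The good-zone lemma only controls $\Phi(U_n;\eta(s),b)$ for $b$ \emph{near} $\eta(s)$, never $\Phi(U_n;x,\eta(s))$, so a finite cover of $\eta([0,1])$ by good-zone balls does not produce the required intersection; the two closed sets $\{s:\Phi(U_n;x,\eta(s))\le C\}$ and $\{s:\Phi(U_n;\eta(s),\gamma(t))\le C\}$ can perfectly well be disjoint.

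The paper sidesteps this propagation problem entirely by a closest-maximizer argument. Fixing $x$, one sets $M \triangleq \sup\{\Phi(U_n;x,y): y \in \A(x),\, d_{\X}(x,y)\ge \sigma\}$, uses compactness and \ref{P4} to pick a maximizer $y_0$ that is \emph{closest} to $x$, and then applies Chasles once to the pair $(x,y_0)$ together with the good-zone lemma at $y_0$. This yields a point $y_1$ on the Chasles curve with $d_{\X}(x,y_1)<d_{\X}(x,y_0)$ and $\Phi(U_n;y_1,y_0)\le C$, hence $M \le \max\{\Phi(U_n;x,y_1),C\}$. The crucial difference from your approach is that one does \emph{not} need $\Phi(U_n;x,y_1)\le C$; one only needs the trivial bound $\Phi(U_n;x,y_1)\le M$, and then the minimality of $d_{\X}(x,y_0)$ forces a contradiction unless $M\le C$. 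In short, the paper trades your forward induction along a curve for a global extremality argument, which is exactly what makes the absence of a triangle inequality harmless.
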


\begin{remark}
We call the lemma the ``Customs Lemma'' as it calls to mind traveling
from $x$ to $y$ through the ``countries'' $\{\Omega_j\}_{j=1}^n$. 
\end{remark}

\begin{proof}
Let $x \in \A$ and define
\begin{equation*}
\A(x) \triangleq \{ y \in \A \mid B(x;d_{\X}(x,y))  \subset \A\}.
\end{equation*}
The set $\A(x)$ is a ball centered
at $x$. Furthermore, using Lemma \ref{lem: good zone around x}, there exists a
$\sigma > 0$ and a corresponding ball $B(x;\sigma) \subset \A$ such that
\begin{equation*}
\Phi(U_n;x,b) \leq C, \quad \text{for all } b \in B(x;\sigma).
\end{equation*}
In particular, we have
\begin{equation} \label{eqn: good zone for A(x)}
\Phi(U_n;x,y) \leq C, \quad \text{for all } y \in B(x;\sigma)
\cap \A(x).
\end{equation}
Consider the set
\begin{equation*}
\A_{\sigma}(x) \triangleq \A(x) \setminus (B(x;\sigma)
\cap \A(x)).
\end{equation*}
The set $\A_{\sigma}(x)$ contains those points $y \in \A(x)$
for which we do not yet have an upper bound for
$\Phi(U_n;x,y)$. Let 
\begin{equation*}
M \triangleq \sup_{y \in \A_{\sigma}(x)} \Phi(U_n;x,y).
\end{equation*}
If we can show that $M \leq C$, then we are finished since we took $x$
to be an arbitrary point of $\A$. By \ref{P4}, the function $y \in
\A_{\sigma}(x) \mapsto \Phi(U_n;x,y)$ is continuous. Thus,
\begin{equation*}
M = \sup_{y \in \overline{\A_{\sigma}(x)}} \Phi(U_n;x,y).
\end{equation*}
Since $X$ is compact, $\overline{\A_{\sigma}(x)}$ is compact as well,
and so the set
\begin{equation*}
\mathcal{S} \triangleq \{ y \in \overline{\A_{\sigma}(x)} \mid
\Phi(U_n;x,y) = M \}
\end{equation*}
is nonempty. We select $y_0 \in \mathcal{S}$ such that 
\begin{equation}\label{eqn: def of y0}
d_{\X}(x,y_0) \leq d_{\X}(x,y), \quad \text{for all } y \in \mathcal{S}.
\end{equation}
Since $\mathcal{S}$ is closed and a subset of $\overline{\A_{\sigma}(x)}$, it
is also compact. Furthermore, the function $y \in \mathcal{S} \mapsto d_{\X}(x,y)$ is
continuous, and so the point $y_0$ must exist. It is,
by definition, the point in $\overline{\A_{\sigma}(x)}$ that not only achieves
the maximum value of the function $y \in \overline{\A_{\sigma}(x)} \mapsto
\Phi(U_n;x,y)$, but also, amongst all such points, it is the one
closest to $x$. Thus we have reduced the problem to showing
that $M = \Phi(U_n;x,y_0) \leq C$.

We claim that it is sufficient to show the following: there exists a
point $y_1 \in \A(x)$ such that $d_{\X}(x,y_1) < d_{\X}(x,y_0)$, and furthermore
satisfies:
\begin{equation} \label{eqn: y1 property}
M = \Phi(U_n;x,y_0) \leq \max\{\Phi(U_n;x,y_1), C\}.
\end{equation}
Indeed, if such a point were to exist, then we could complete the
proof in the following way. If $C$ is the max of the right hand side
of \eqref{eqn: y1 property}, then clearly we are finished. If, on the
other hand, $\Phi(U_n;x,y_1)$ is the max, then we have two cases to
consider. If $d_{\X}(x,y_1) < \sigma$, then $y_1 \in B(x;\sigma) \cap
\A(x)$, and so by \eqref{eqn: good zone for A(x)} we know that
$\Phi(U_n;x,y_1) \leq C$. Alternatively, if $d_{\X}(x,y_1) \geq \sigma$,
then $y_1 \in \A_{\sigma}(x)$ and by the definition of $M$ we have
$\Phi(U_n;x,y_1) \leq M$, which by \eqref{eqn: y1 property} implies
that $\Phi(U_n;x,y_1) = M$. But $y_0$ is the closest point to $x$ for
which the function $y \in \overline{\A_{\sigma}(x)} \mapsto \Phi(U_n;x,y)$
achieves the maximum $M$. Thus we have arrived at a contradiction.

Now we are left with the task of showing the existence of such a point
$y_1$. Apply Lemma \ref{lem: good zone around x} to the point $y_0$ to
obtain a radius $\sigma'$ such that $B(y_0;\sigma') \subset \A$ and
for each $b \in B(y_0;\sigma')$, one has $\Phi(U_n;y_0,b) \leq
C$. Since $y_0 \in \A_{\sigma}(x) \subset \A(x)$, we also know that
$B(x;d_{\X}(x,y_0))  \subset \A(x) \subset \A$. Therefore
$\overline{B}_{1/2}(x,y_0) \subset \A(x)$. Let $\gamma: [0,1]
\rightarrow \overline{B}_{1/2}(x,y_0)$ be the curve guaranteed to exist by
\ref{P3}, and take $y_1$ to be the intersection point of $\gamma$ with $\partial
B(y_0;\sigma')$. Clearly $y_1 \in \overline{B}_{1/2}(x,y_0) \subset \A(x)$, and
furthermore it satisfies: 
\begin{align*}
\Phi(U_n;x,y_0) &\leq \inf_{t \in
  [0,1]}\max\{\Phi(U_n;x,\gamma(t)),
\Phi(U_n;\gamma(t),y_0)\} \\ 
&\leq \max\{\Phi(U_n;x,y_1), \Phi(U_n;y_1,y_0)\} \\
&\leq \max\{\Phi(U_n;x,y_1), C\}.
\end{align*}
Finally, using the monotonicity property of the curve
$\gamma$, we see that $d_{\X}(x,y_1) < d_{\X}(x,y_0)$. \qed
\end{proof}

\section{Open questions and future directions}

From here, there are several possible directions. The first was
already mentioned earlier, and involves the behavior of the quasi-AMLE
$U(f,\rho,N_0,\alpha,\sigma_0)$ as $\rho \rightarrow 0$, $N_0
\rightarrow \infty$, $\alpha \rightarrow 0$, and $\sigma_0 \rightarrow
0$. For the limits in $\alpha$ and $\sigma_0$ in particular, it seems
that either more understanding or further exploitation of the
geometrical relationship between $(\X,d_{\X})$ and $(Z,d_Z)$ is
necessary. Should something of this nature be resolved, though, it
would prove the existence of an AMLE under this general setup.

One may also wish to relax the assumptions on $(\X,d_{\X})$. The most
general domain possible in other results is when $(\X,d_{\X})$ is a
length space. Of course then there are far greater restrictions on the
range, and the results only hold for $\Phi = \Lip$. It would seem that
the case of $1$-fields (Section \ref{sec: 1-fields}), in which
$(\X,d_{\X})$ can actually be any Hilbert space, would be a good
specific case in which to work on both this point and the previous
one.

A final possible question concerns the property \ref{P2}. This property
requires that an isometric extension exist for each $f \in
\F_{\Phi}(\X,Z)$; that is, that the Lipschitz constant is preserved
perfectly. What if, however, one had the weaker condition that the
Lipschitz constant be preserved up to some constant? In other words,
suppose that we replace \ref{P2} with the following weaker condition:
\begin{itemize}
\item[$(P_2')$]
Isomorphic Lipschitz extension: \\
For all $f \in \F_{\Phi}(X,Z)$ and for all $D \subset X$ such that
$\dom(f) \subset D$, there exists an extension $F: D \rightarrow Z$
such that
\begin{equation} \label{eqn: P2 isomorphic condition}
\Phi(F;D) \leq C \cdot \Phi(f; \dom(f)),
\end{equation}
where $C$ depends on $(\X,d_{\X})$ and $(Z,d_Z)$.
\end{itemize}
Suppose then we wish to find an $F$ satisfying \eqref{eqn: P2
  isomorphic condition} that also satisfies the AMLE
condition to within a constant factor? The methods here, in which we correct locally, would be
hard to adapt given that with each correction, we would lose a factor
of $C$ in \eqref{eqn: P2 isomorphic condition}. 

\section{Acknowledgements}

E.L.G. is partially supported by the ANR (Agence Nationale de la
Recherche) through HJnet projet ANR-12-BS01-0008-01. M.J.H. would like
to thank IRMAR (The Institue of Research of Mathematics of Rennes) for
supporting his visit in 2011, during which time the authors
laid the foundation for this paper. Both authors would like to
acknowledge the Fields Institute for hosting them for two weeks in
2012, which allowed them to complete this work. 

Both authors would also like to thank the anonymous reviewer for his or her
helpful comments.

\appendix

\section{Equivalence of AMLE definitions} \label{sec: equiv amle defs}

In this appendix we prove that the two definitions for an AMLE with a
generalized functional $\Phi$ are equivalent so long as the domain
$(\X,d_{\X})$ is path connected. First recall the two definitions:
\begin{definition}\label{def: appendix abstract general amle def}
Let $f \in \F_{\Phi}(\X,Z)$ with $\dom(f)$ closed and let $U \in \F_{\Phi}(\X,Z)$ be a
minimal extension of $f$ with $\dom(U) = \X$. Then $U$ is an {\it
  absolutely minimal Lipschitz extension} of $f$ if for every
open set $V \subset \X \setminus \dom(f)$ and every $\widetilde{U} \in
\F_{\Phi}(\X,Z)$ with $\dom(\widetilde{U}) = \X$ that coincides with
$U$ on $\X \setminus V$, 
\begin{equation*}
\Phi(U;V) \leq \Phi(\widetilde{U};V).
\end{equation*}
\end{definition}
\begin{definition}\label{def: appendix aronsson general amle def}
Let $f \in \F_{\Phi}(\X,Z)$ with $\dom(f)$ closed and let $U \in
\F_{\Phi}(\X,Z)$ be a minimal extension
of $f$ with $\dom(U) = \X$. Then $U$ is an {\it absolutely minimal Lipschitz
  extension} of $f$ if
\begin{equation*}
\Phi(U; V) = \Phi(U; \partial V), \quad \text{for all open } V \subset
\X \setminus \dom(f).
\end{equation*}
\end{definition}
\begin{proposition}
Suppose that $(\X,d_{\X})$ is path connected. Then Definition
\ref{def: appendix abstract general amle def} is equivalent to
Definition \ref{def: appendix aronsson general amle def}.
\end{proposition}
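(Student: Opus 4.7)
My plan is to handle the two implications in turn; Definition \ref{def: appendix aronsson general amle def} $\Rightarrow$ Definition \ref{def: appendix abstract general amle def} is essentially a one-line consequence of Aronsson's condition, whereas the reverse implication requires building an explicit competitor. In both directions I will lean on the following elementary continuity fact: for any $G \in \F_\Phi(\X, Z)$ with $\dom(G) \supset \overline{V}$, $\Phi(G; V) \geq \Phi(G; \partial V)$. Indeed, given $x, y \in \partial V$ with $x \neq y$, approximate by $x_n, y_n \in V$ (possible since $\partial V \subset \overline{V}$ and $V$ is open); \ref{P4}, combined with \ref{P0} to transfer continuity in the second argument to the first, gives $\Phi(G; x_n, y_n) \to \Phi(G; x, y)$, while each term is bounded by $\Phi(G; V)$ by \ref{P1}.

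For Definition \ref{def: appendix aronsson general amle def} $\Rightarrow$ Definition \ref{def: appendix abstract general amle def}, fix $V$ open with $V \subset \X \setminus \dom(f)$ and a competitor $\widetilde{U}$ coinciding with $U$ on $\X \setminus V \supset \partial V$. Combining Aronsson's equality with the continuity fact,
\[
\Phi(U; V) = \Phi(U; \partial V) = \Phi(\widetilde{U}; \partial V) \leq \Phi(\widetilde{U}; V),
\]
which is exactly Definition \ref{def: appendix abstract general amle def}.

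For the reverse implication, the continuity fact already yields $\Phi(U; V) \geq \Phi(U; \partial V)$. For the opposite inequality, I would construct a competitor: apply \ref{P2} to the restriction $U|_{\partial V}$ (which sits in $\F_\Phi$ since its $\Phi$-value is bounded by $\Phi(U;\X)$) to produce $F : \overline{V} \to Z$ with $F|_{\partial V} = U|_{\partial V}$ and $\Phi(F; \overline{V}) = \Phi(U; \partial V)$, and set
\[
\widetilde{U}(x) \triangleq \begin{cases} F(x), & x \in \overline{V}, \\ U(x), & x \in \X \setminus \overline{V}. \end{cases}
\]
This $\widetilde{U}$ is well defined and continuous (via \ref{P5}), and agrees with $U$ on $\X \setminus V$. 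Granting $\widetilde{U} \in \F_\Phi(\X, Z)$, Definition \ref{def: appendix abstract general amle def} delivers
\[
\Phi(U; V) \leq \Phi(\widetilde{U}; V) \leq \Phi(\widetilde{U}; \overline{V}) = \Phi(F; \overline{V}) = \Phi(U; \partial V),
\]
closing the loop.

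The main obstacle is the parenthetical assumption that $\widetilde{U} \in \F_\Phi(\X, Z)$, i.e., that its global $\Phi$-value is finite. Pairs $x, y$ lying both in $\overline{V}$ or both in $\X \setminus \overline{V}$ are controlled trivially by $\Phi(F; \overline{V})$ and $\Phi(U; \X)$ respectively. The mixed case ($x \in \X \setminus \overline{V}$, $y \in \overline{V}$) is handled exactly as in the proof of Lemma \ref{douanes_lem_0}: apply \ref{P3} to pick $\gamma \in \Gamma(x, y)$, use continuity of $\gamma$ with $\gamma(0) \notin \overline{V}$ and $\gamma(1) \in \overline{V}$ to select $t_0$ with $\gamma(t_0) \in \partial V$, and conclude
\[
\Phi(\widetilde{U}; x, y) \leq \max\{\Phi(U; x, \gamma(t_0)),\ \Phi(F; \gamma(t_0), y)\} \leq \max\{\Phi(U;\X),\ \Phi(U; \partial V)\},
\]
using $\widetilde{U}(\gamma(t_0)) = U(\gamma(t_0)) = F(\gamma(t_0))$. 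This is the only step that genuinely invokes \ref{P3}, and it is where the path-connectedness of $\X$ (in the stronger form supplied by \ref{P3}) is essential.
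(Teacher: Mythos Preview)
Your proof is correct and follows the same strategy as the paper: one direction compares $\Phi(U;V)$ to $\Phi(\widetilde U;\partial V)$ directly, the other constructs a competitor by minimally re-extending $U|_{\partial V}$ (the paper phrases both as contradictions and packages the competitor via the operator $H$, but the content is identical). You are in fact more careful than the paper, explicitly justifying $\Phi(G;\partial V)\le\Phi(G;V)$ via \ref{P4} and checking $\widetilde U\in\F_\Phi(\X,Z)$ by the argument of Lemma~\ref{douanes_lem_0}, both of which the paper's appendix leaves implicit.
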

\begin{proof}
Since $(\X,d_{\X})$ is path connected, the only sets that are both
open and closed are $\emptyset$ and $\X$. Let $V \subset \X \setminus
\dom(f)$. The case $V = \emptyset$ is vacuous for both definitions,
and since $\dom(f) \neq \emptyset$, the case $V = \X$ is
impossible. Thus every open set $V \subset \X \setminus \dom(f)$ is
not also closed; in particular, $\partial V \neq \emptyset$. 

We first prove that Definition \ref{def: appendix abstract general
  amle def} implies Definition \ref{def: appendix aronsson general
  amle def}. Let $U \in \F_{\Phi}(\X,Z)$ be an AMLE for $f$ satisfying
the condition of Definition \ref{def: appendix abstract general amle
  def}, and suppose by contradiction that $U$ does not satisfy the
condition of Definition \ref{def: appendix aronsson general amle
  def}. That would mean, in particular, that there exists an open set
$V \subset \X \setminus \dom(f)$ such that $\Phi(U; \partial V) <
\Phi(U; V)$. We can then define a new minimal extension $\widetilde{U}
\in \F_{\Phi}(\X,Z)$ as follows:
\begin{equation*}
\widetilde{U}(x) \triangleq \left\{
\begin{array}{ll}
H(U;V)(x), & \text{if } x \in V, \\
U(x), & \text{if } x \in \X \setminus V,
\end{array}
\right.
\end{equation*}
where $H$ is the correction operator defined in Definition
\ref{Def.H}. But then $\widetilde{U}$ coincides with $U$ on $\X
\setminus V$ and $\Phi(\widetilde{U}; V) = \Phi(U; \partial V) <
\Phi(U; V)$, which is a contradiction. 

For the converse, suppose $U \in \F_{\Phi}(\X,Z)$ satisfies Definition
\ref{def: appendix aronsson general amle def} but does not satisfy
Definition \ref{def: appendix abstract general amle def}. Then there
exists and open set $V \subset \X \setminus \dom(f)$ and a function
$\widetilde{U} \in \F_{\Phi}(\X,Z)$ with $\dom(\widetilde{U}) = \X$
that coincides with $U$ on $\X \setminus V$ such that
$\Phi(\widetilde{U}; V) < \Phi(U; V)$. Since $U$ and $\widetilde{U}$
coincide on $\X \setminus V$, $\Phi(\widetilde{U}; \partial V) =
\Phi(U; \partial V)$. On the other hand, $\Phi(\widetilde{U}; \partial
V) \leq \Phi(\widetilde{U};V) < \Phi(U;V) = \Phi(U;\partial V)$. Thus
we have a contradiction. \qed
\end{proof}

\section{Proof that \ref{P0}-\ref{P5} hold for $1$-fields} \label{sec:
  proof of P3 for 1 fields}

In this appendix we consider the case of $1$-fields and the functional
$\Phi = \Gamma^1$ first defined in Section \ref{sec: 1-fields}. Recall
that $(\X,d_{\X}) = \R^d$ with $d_{\X}(x,y) = \|x-y\|$,
where $\|\cdot\|$ is the Euclidean distance. The range $(Z,d_Z)$ is
taken to be $\PP^1(\R^d,\R)$, with elements $P \in \PP^1(\R^d,\R)$
given by $P(a) = p_0 + D_0p \cdot a$, with $p_0 \in \R$, $D_0p \in
\R^d$, and $a \in \R^d$. The distance $d_Z$ is defined as: $d_Z(P,Q)
\triangleq |p_0 - q_0| + \|D_0p - D_0q\|$. For a function $f \in
\F(\X,Z)$, we use the notation $x \in \dom(f) \mapsto f(x)(a) = f_x + D_xf
\cdot (x-a)$, where $f_x \in \R$, $D_xf \in \R^d$, and once again $a
\in \R^d$. Note that $f(x) \in \PP^1(\R^d,\R)$. The functional $\Phi$ is defined as:
\begin{equation} \label{eqn: gamma1 def appendix}
\Phi(f;x,y) = \Gamma^1(f;x,y) \triangleq 2 \sup_{a \in \R^d}
\frac{|f(x)(a) - f(y)(a)|}{\|x-a\|^2 + \|y-a\|^2}.
\end{equation}
Rather than $\Phi$, we shall write $\Gamma^1$ throughout the
appendix. The goal is to show that the properties \ref{P0}-\ref{P5} hold
for $\Gamma^1$ and the metric spaces $(\X,d_{\X})$ and $(Z,d_Z)$. 

\subsection{\ref{P0} and \ref{P1} for $\Gamma^1$}

The property \ref{P0} (symmetry and nonnegative) is clear from the
definition of $\Gamma^1$ in \eqref{eqn: gamma1 def appendix}. The
property \ref{P1} (pointwise evaluation) is by definition.

\subsection{\ref{P2} for $\Gamma^1$}

The property \ref{P2} (existence of a minimal extension to $\X$ for each
$f \in \F_{\Gamma^1}(\X,Z)$) is the main result of
\cite{legruyer:minimalLipschitz1Field2009}. We refer the reader to
that paper for the details.

\subsection{\ref{P3} for $\Gamma^1$}

Showing property \ref{P3}, Chasles' inequality, requires a detailed study of the
domain of uniqueness for a biponctual $1-$field (i.e., when $\dom(f)$
consists of two points). Let $\PP^m(\R^d,\R)$ denote the space of
polynomials of degree $m$ mapping $\R^d$ to $\R$.

For $f \in \F_{\Gamma^1}(\X,Z)$ and $x,y \in \dom(f)$, $x \neq y$ we define
\begin{equation*}
A(f;x,y) \triangleq \frac{2(f_x-f_y)+ (D_xf+D_yf) \cdot (y-x)}{\|x-y \|^2}
\end{equation*}
and
\begin{equation} \label{eqn: B functional}
B(f;x,y) \triangleq \frac{\| D_xf-D_yf\| }{\|x-y \|}.
\end{equation}
Using  \cite{legruyer:minimalLipschitz1Field2009}, Proposition 2.2, we
have  for any $D \subset \dom(f)$,
\begin{equation}\label{AnQ0}
 \Gamma^1(f;D) = \sup_{\substack{x,y \in D \\ x \neq y}} \left(\sqrt{A(f;x,y)^2+B(f;x,y)^2} +|A(f;x,y)|\right). 
\end{equation}

For the remainder of this section, fix $f \in \F_{\Gamma^1}(\X,Z)$, with $\dom(f)
=\{x,y\}$, $x \neq y$, $f(x) = P_x$, $f(y) = P_y$, and set 
\begin{equation}\label{AnQ1}
M \triangleq \Gamma^1 (f;\dom(f)).
\end{equation}
Also, for an arbitrary pair of points $a,b \in \R^d$, let $[a,b]$
denote the closed line segment with end points $a$ and $b$.

\begin{proposition}\label{AnP1}
Let $F$ be an extension of $f$ such that
$\overline{B}_{1/2}(x,y) \subset \dom(F)$. Then there exists a point $c \in
\overline{B}_{1/2}(x,y)$ that depends only on $f$ such that
\begin{equation} \label{eqn: 1-field charles}
\Gamma^1(F;x,y) \leq \max\{\Gamma^1(F;x,a),
\Gamma^1(F;a,y)\}, \text{ for all } a \in [x,c] \cup [c,y].
\end{equation}
\end{proposition}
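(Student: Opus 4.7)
The plan is to use the explicit formula (AnQ0) and recast the condition $\Gamma^1(F;u,v)\le M$ algebraically. From $\Gamma^1 = \sqrt{A^2+B^2} + |A|$ one checks that $\Gamma^1(F;u,v)\le M$ is equivalent to the piecewise quadratic inequality $B(F;u,v)^2 + 2M|A(F;u,v)| \le M^2$. Treating the data of $F$ at $a$---the pair $(v,w)\triangleq(F_a,D_aF)\in\R\times\R^d$---as the free parameters (those at $x$ and $y$ being fixed by $f$), the two conditions $\Gamma^1(F;x,a)\le M$ and $\Gamma^1(F;a,y)\le M$ cut out closed convex regions $S_x(a)$ and $S_y(a)$ in $\R^{d+1}$. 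The proposition is equivalent to the statement that, for $a\in[x,c]\cup[c,y]$, the set $S_x(a)\cap S_y(a)$ contains no point at which both defining inequalities are strict.

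That $S_x(a)\cap S_y(a)$ is nonempty follows from \cite{legruyer:minimalLipschitz1Field2009}: the global minimal extension $U$ of $f$ supplied by the proof of \ref{P2} provides a concrete point $(U_a,D_aU)\in S_x(a)\cap S_y(a)$. The content of the proposition is that, for the appropriate $a$, this is the \emph{only} such point.

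To identify $c$, I would first restrict attention to the line through $x$ and $y$. Along that line the transverse components of $D_aF$ enter only through $B(F;x,a)^2$ and $B(F;a,y)^2$ as additive quadratic penalties, so any attempt to relax the constraints forces $B$ up on both sides. The remaining along-line problem reduces to two scalar inequalities of the form $B^2+2M|A|\le M^2$, whose boundaries are parabolas and which touch at a single point precisely at an ``extremal midpoint'' computable from $A(f;x,y)$ and $B(f;x,y)$ in (AnQ0). I would take $c$ to be this point, which by construction lies on $[x,y]\subset\overline{B}_{1/2}(x,y)$ and depends only on $f$. For $a$ between $x$ and $c$, the along-line constraint from $S_x(a)$ is the binding one, forcing the $S_y$-constraint to be tight; the roles of $S_x$ and $S_y$ reverse on $[c,y]$, which gives the desired inequality (\ref{eqn: 1-field charles}) after taking the maximum.

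The main obstacle will be ruling out a ``transverse cheat'': verifying that allowing the transverse components of $D_aF$ to vary simultaneously across both constraints does not open up a strictly feasible interior in $S_x(a)\cap S_y(a)$. I would address this by a Lagrangian or separating-hyperplane argument exploiting the strict convexity of the Euclidean norm that defines $B(F;\cdot,\cdot)$ in (\ref{eqn: B functional}), or, more expediently, by invoking the explicit formulas for the minimal biponctual extension derived in \cite{legruyer:minimalLipschitz1Field2009}, which were tailored precisely to characterize such domain-of-uniqueness points. The remaining bookkeeping---tracking the signs of $A(F;x,a)$ and $A(F;a,y)$, and verifying the endpoint cases $a=x,c,y$---is routine but should be carried out to close the argument.
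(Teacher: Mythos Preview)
Your high-level reformulation is sound and matches the paper's strategy: showing that for the right $a$, the convex regions $S_x(a)$ and $S_y(a)$ meet only at a boundary point is exactly the content of Lemma~\ref{AnP4} (uniqueness of the minimal extension along the path), and the contradiction that closes the argument is the same as the paper's. The algebraic equivalence $\Gamma^1\le M \iff B^2+2M|A|\le M^2$ is also correct and is what drives the paper's inequalities \eqref{uni1}.

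The genuine gap is your location of $c$. You place $c$ on the segment $[x,y]$ and build the whole ``along-line'' reduction on that. But the paper's $c$, given explicitly in Lemma~\ref{AnP2} by
\[
c=\frac{x+y}{2}+s\,\frac{D_xf-D_yf}{2M},
\]
lies on $[x,y]$ only in the degenerate case where $D_xf-D_yf$ is parallel to $y-x$; in general the path $[x,c]\cup[c,y]$ is a genuine broken line inside $\overline{B}_{1/2}(x,y)$. This is not cosmetic: the paper's uniqueness argument (Lemma~\ref{AnP4}) works precisely because the explicit minimal extension of Lemma~\ref{AnP3} is a \emph{single quadratic} on each of $[x,c]$ and $[c,y]$, which forces $A(U;e,a)=0$ for $e\in\{x,c\}$ (Lemma~\ref{AnP5}) and yields the four inequalities \eqref{uni4_1}--\eqref{uni5_2} that pin down $(\delta_a,\Delta_a)=(0,0)$. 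On the straight segment $[x,y]$ no single quadratic interpolates $f(x)$ and $f(y)$ when $A(f;x,y)\neq 0$, so this mechanism is unavailable and your parabola-touching picture along the line does not close. Your own ``transverse cheat'' worry is exactly this issue, and the Lagrangian/separating-hyperplane sketch you offer is too vague to resolve it; the paper's resolution is the concrete computation on the bent path just described. If you invoke the explicit biponctual formulas from \cite{legruyer:minimalLipschitz1Field2009} as you suggest in passing, you will be led to the off-line $c$ and essentially to the paper's proof.
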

\begin{remark}
Proposition \ref{AnP1} implies that the operator $\Gamma^1$
satisfies the Chasles' inequality (property \ref{P3}). In particular,
consider an arbitrary $1$-field $g \in \F_{\Gamma^1}(\X,Z)$ with $x,y \in \dom(g)$ such
that $\overline{B}_{1/2}(x,y) \subset \dom(g)$. Then $g$
is trivially an extension of the $1$-field $g|_{\{x,y\}}$, and so in
particular satisfies \eqref{eqn: 1-field charles}. But this is the
Chasles' inequality with $\gamma = [x,c] \cup [c,y]$.
\end{remark}

To prove proposition \ref{AnP1}  we will use the following lemma.
\begin{lemma}\label{AnP2}
There exists $c \in \overline{B}_{1/2}(x,y)$ and $s \in \{-1,1\}$ such that 
\begin{equation*}
M =  2s\dfrac{ P_x(c)- P_y(c) }{\|x-c \|^2 + \|y-c \|^2}.
\end{equation*}
Furthermore,
\begin{align}
&c =  \dfrac{x+y}{2} + s\dfrac{D_xf-D_yf}{2M}, \label{Ape2} \\
&P_x(c)  -s \dfrac{M}{2}\|x-c\|^2   =   P_y(c)+s\dfrac{M}{2}\|y-c\|^2, \nonumber \\
&D_xf+sM (x-c)   =   D_yf-sM (y-c). \nonumber
\end{align}
Moreover, all minimal extensions of $f$ coincide at $c$.
\end{lemma}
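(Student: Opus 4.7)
The approach is to realize the supremum defining $M$ as an attained maximum, characterize the maximizer $c$ by a first-order condition, and then verify both the explicit formulas and the uniqueness claim. Set $\phi(a)\triangleq(P_x(a)-P_y(a))/(\|x-a\|^2+\|y-a\|^2)$, so that by \eqref{eqn: gamma1 def appendix} and \eqref{AnQ1} we have $M=2\sup_{a\in\R^d}|\phi(a)|$. The numerator of $\phi$ is affine in $a$ while the denominator grows quadratically, so $\phi(a)\to 0$ as $\|a\|\to\infty$; since $\phi$ is continuous on $\R^d$, the supremum is attained at some $c\in\R^d$. Choosing $s\in\{-1,+1\}$ with $sM/2=\phi(c)$ produces the first identity of the lemma.

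For the explicit form \eqref{Ape2} I would use the first-order condition $\nabla\phi(c)=0$. Computing with $\nabla_a(P_x(a)-P_y(a))=D_xf-D_yf$ and $\nabla_a(\|x-a\|^2+\|y-a\|^2)=4a-2(x+y)$, the equation $\nabla\phi(c)=0$ combined with the defining identity for $s$ simplifies to $D_xf-D_yf=2sM(c-(x+y)/2)$, which is exactly \eqref{Ape2}. The two remaining displayed equations are direct algebraic rearrangements: the first encodes the defining identity for $s$ split symmetrically about $c$, and the second recasts \eqref{Ape2}. For the containment $c\in\overline{B}_{1/2}(x,y)$, rewrite \eqref{Ape2} as $\|c-(x+y)/2\|=B(f;x,y)\|x-y\|/(2M)$ via \eqref{eqn: B functional}; the bound $M\geq B$ read off from \eqref{AnQ0} then yields $\|c-(x+y)/2\|\leq\|x-y\|/2$.

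For uniqueness of $F(c)$ among minimal extensions, write $F(c)(a)=F_c+D_cF\cdot(a-c)$. The constraints $\Gamma^1(F;x,c)\leq M$ and $\Gamma^1(F;y,c)\leq M$, specialized to $a=c$, give $|P_x(c)-F_c|\leq(M/2)\|x-c\|^2$ and $|P_y(c)-F_c|\leq(M/2)\|y-c\|^2$. Summing and applying the triangle inequality recovers $|P_x(c)-P_y(c)|\leq(M/2)(\|x-c\|^2+\|y-c\|^2)$, which is an equality by the very choice of $(c,s)$; hence both bounds are saturated with consistent sign $s$, pinning $F_c$ to the stated value. To fix $D_cF$, consider the scalar function $q(a)\triangleq s(P_x(a)-F(c)(a))-(M/2)(\|x-a\|^2+\|c-a\|^2)$, which is nonpositive on $\R^d$ and vanishes at $a=c$; hence $\nabla q(c)=0$, which yields $D_cF=D_xf+sM(x-c)$, and the analogous argument on the $y$-side recovers $D_cF=D_yf-sM(y-c)$. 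I expect the main difficulty to lie precisely in this last step: determining the linear part $D_cF$ rather than just the constant $F_c$ requires exploiting the extremal character of $c$ through the vanishing-at-a-maximum argument for $q$, whereas the remaining ingredients reduce to straightforward calculus and algebra once the maximizer is in hand.
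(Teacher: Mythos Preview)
Your argument is correct and complete. The attainment of the supremum, the first-order condition yielding \eqref{Ape2}, the algebraic verification of the two remaining identities, the containment $c\in\overline{B}_{1/2}(x,y)$ via $B\leq M$ from \eqref{AnQ0}, and the equality-case plus vanishing-gradient argument pinning down both $F_c$ and $D_cF$ all check out.

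The paper itself does not give a proof: it simply cites Propositions~2.2 and~2.13 of \cite{legruyer:minimalLipschitz1Field2009} and omits the details. Your write-up is therefore strictly more informative than what appears in the paper. In spirit the approaches coincide---Proposition~2.2 of that reference is exactly the identity \eqref{AnQ0} you invoke, and the uniqueness statement there plays the role of your saturation argument---but you have supplied a self-contained derivation rather than a pointer. One minor point worth flagging explicitly: your formula \eqref{Ape2} presumes $M>0$; this is automatic unless $P_x\equiv P_y$, a degenerate case in which the lemma is vacuous or trivial.
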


The proof of Lemma \ref{AnP2} uses
\cite{legruyer:minimalLipschitz1Field2009}, Propositions 2.2 and
2.13. The details are omitted. Throughout the remainder of this
section, let $c$ denote the point which satisfies Proposition
\ref{AnP2}.

\begin{lemma}\label{AnP3} 
Define $\widetilde{P}_c \in \PP^1(\R^d,\R)$ as
\begin{equation*}
\widetilde{P}_{c}(z) \triangleq \tilde{f}_c + D_c\tilde{f} \cdot
(z-c), \enspace z \in \R^d, 
\end{equation*}
where
\begin{equation*}
 \tilde{f}_c \triangleq  P_x(c)-s\dfrac{M}{2}\|x - c\|^2 ,
\end{equation*}
and
\begin{equation*}
 D_c\tilde{f} \triangleq D_xf+sM(x-c).
\end{equation*}
If $A(f;x,y)=0$, then the following polynomial
\begin{equation*}
 F(z) \triangleq \widetilde{P}_{c}(z)  -s\dfrac{M}{2} \dfrac{[(z-c)
   \cdot (x-c)]^2}{\| x-c \|^2} +s\dfrac{M}{2} \dfrac {[(z-c) \cdot
   (y-c)]^2}{\| y-c\|^2}, \enspace z \in \R^d 
\end{equation*}
is a minimal extension of $f$.

If $A(f;x,y) \neq 0$, let  $z \in \R^d$ and set $p(z) \triangleq (x-c)
\cdot (z-c) $ and $q(z) \triangleq (y-c) \cdot (z-c)$. We define 
\begin{equation*}
F(z) \triangleq \left\{
\begin{array}{ll}
\widetilde{P}_{c}(z) -s\dfrac{M}{2} \dfrac {[(z-c) \cdot (x-c)]^2}{\| x-c \|^2}, &
\text{if }  p(z) \geq 0 \text{ and } q(z) \leq 0, \\
\widetilde{P}_{c}(z) +s\dfrac{M}{2} \dfrac{[(z-c) \cdot (y-c)]^2}{\| y-c \|^2}, &
\text{if } p(z) \leq 0 \text{ and } q(z) \geq 0, \\
\widetilde{P}_{c}(z), & \text{if }  p(z) \leq 0 \text{ and } q(z) \leq
0, \\
\widetilde{P}_{c}(z) -s\dfrac{M}{2} \dfrac{[(z-c) \cdot (x-c)]^2}{\|
  x-c \|^2} +s\dfrac{M}{2} \dfrac{[(z-c) \cdot (y-c)]^2}{\| y-c\|^2},
& \text{if } p(z) \geq 0 \text{ and }  q(z) \geq 0.
\end{array}
\right.
\end{equation*}
Then $F$ is a minimal extension of $f$.
\end{lemma}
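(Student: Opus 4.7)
The plan is to verify, in both cases of the lemma, that the function $F$ constructed (a) induces a $1$-field that extends $f$, i.e.\ $F(x) = f_x$, $\nabla F(x) = D_xf$, $F(y) = f_y$, $\nabla F(y) = D_yf$, and (b) is $C^1$ with $\Lip(\nabla F) \leq M$. Combining (b) with the main result of \cite{legruyer:minimalLipschitz1Field2009}, which equates $\Gamma^1$ of the $1$-field of a $C^{1,1}$ function with the Lipschitz constant of its gradient, yields $\Gamma^1(F;\R^d) \leq M$; together with the lower bound \eqref{AnQ1} this gives equality and hence minimality.

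For (a), I will substitute the three identities supplied by Lemma \ref{AnP2}---namely $\tilde f_c = P_x(c) - s(M/2)\|x-c\|^2 = P_y(c) + s(M/2)\|y-c\|^2$ and $D_c\tilde f = D_xf + sM(x-c) = D_yf - sM(y-c)$---directly into the formulas for $F$ and $\nabla F$ at $z = x$ and $z = y$. The quadratic correction terms are designed so that their values and gradients at $x$ (resp.\ $y$) cancel exactly the discrepancy between $\widetilde P_c$ and $P_x$ (resp.\ $P_y$); a short direct computation using \eqref{Ape2} produces the four required equalities.

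For (b) in the case $A(f;x,y) = 0$, I first use \eqref{AnQ0} to obtain $M = B(f;x,y) = \|D_xf - D_yf\|/\|x-y\|$, which combined with \eqref{Ape2} shows that $x - c$ and $y - c$ are orthogonal. Writing $\hat u,\hat v$ for the unit vectors along $x-c$ and $y-c$, direct differentiation of $F$ gives
\begin{equation*}
\nabla F(z_1) - \nabla F(z_2) = sM\bigl[((z_1-z_2)\cdot\hat v)\hat v - ((z_1-z_2)\cdot\hat u)\hat u\bigr],
\end{equation*}
and the Pythagorean identity in the orthonormal basis $\{\hat u,\hat v\}$ immediately yields $\|\nabla F(z_1) - \nabla F(z_2)\| \leq M\|z_1 - z_2\|$.

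The main obstacle lies in the case $A(f;x,y) \neq 0$, where $F$ is defined piecewise on the four regions cut out by the signs of $p$ and $q$; I dispatch this in four steps. First, I verify that $F$ is $C^1$ across every interface $\{p = 0\}$ or $\{q = 0\}$: since $\nabla[(z-c)\cdot(x-c)]^2 = 2p(z)(x-c)$ vanishes on $\{p = 0\}$ (and symmetrically for $q$), the quadratic term that is discarded in the neighbouring region contributes neither value nor first derivative at the interface. Second, I check using \eqref{Ape2} that $(x-c)\cdot(y-c) = -\|x-y\|^2/4 + \|D_xf - D_yf\|^2/(4M^2) < 0$ when $A \neq 0$, which places $x$ in the region $\{p \geq 0, q \leq 0\}$ and $y$ in the region $\{p \leq 0, q \geq 0\}$, so that the interpolation argument of (a) applies verbatim with the corresponding formula. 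Third, in each of the four regions I compute $\nabla F(z_1) - \nabla F(z_2)$ explicitly; with $\hat u,\hat v$ the relevant unit vectors, the operator $w \mapsto (w\cdot\hat v)\hat v - (w\cdot\hat u)\hat u$ is symmetric of trace zero on $\mathrm{span}(\hat u,\hat v)$ and has operator norm $\sqrt{1 - (\hat u\cdot\hat v)^2} \leq 1$, so within each region $\|\nabla F(z_1) - \nabla F(z_2)\| \leq M\|z_1 - z_2\|$ (some quadratic corrections are absent, making the bound only sharper). Fourth, for $z_1, z_2$ lying in different regions, I split the segment $[z_1, z_2]$ at its crossing points with $\{p=0\} \cup \{q=0\}$; the triangle inequality combined with the per-region bound and the $C^1$ matching from the first step yields $\|\nabla F(z_1) - \nabla F(z_2)\| \leq M\|z_1 - z_2\|$ globally, completing the verification of (b) and hence of the lemma.
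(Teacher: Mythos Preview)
Your proposal is correct and follows the same approach as the paper. The paper's own proof consists of the single observation that $A(f;x,y)=0$ implies $(x-c)\cdot(c-y)=0$ (which you also establish via \eqref{Ape2} and $M=B$), after which it simply declares ``the proof is easy to check''; you have supplied a careful outline of that check, including the $C^1$ matching across the interfaces and the operator-norm bound $\sqrt{1-(\hat u\cdot\hat v)^2}\leq 1$ for $\hat v\hat v^T-\hat u\hat u^T$. One minor wording issue: when a quadratic correction is absent the single projection $\hat u\hat u^T$ has operator norm exactly $1$, so the bound is not ``sharper'' there but merely still $\leq M$; this does not affect the argument.
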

\begin{remark}\label{rem: $1$-field from function}
The function $F$ is an extension of the $1$-field $f$ in the following
sense. $F$ defines a $1$-field via its first order Taylor polynomials;
in particular, define the $1$-field $U$ with $\dom(U) = \dom(F)$ as:
\begin{equation*}
U(a) \triangleq J_aF, \enspace a \in \dom(F),
\end{equation*}
where $J_aF$ is the first order Taylor polynomial of $F$. We then
have:
\begin{align*}
&U(x) = f(x) \quad \text{and} \quad U(y) = f(y), \\
&\Gamma^1(U;\dom(U)) = \Gamma^1(f;\dom(f)).
\end{align*}
\end{remark}
\begin{proof}
After showing that the equality $A(f;x,y)=0$ implies that $(x-c) \cdot
(c-y) = 0$, the proof  is easy to check. Suppose that $A(f;x,y)=0$. By
\eqref{eqn: gamma1 def appendix} and \eqref{AnQ1} we have
$M=B(f;x,y)$. By \eqref{Ape2} we have  
\begin{equation*}
\|2c-(x+y)\| = \dfrac{\|D_xf-D_yf\|}{M} = \|x-y\|.
\end{equation*}
Therefore $(x-c) \cdot (c-y) = 0$. \qed
\end{proof}

The proof of the following lemma is also easy to check.
\begin{lemma}\label{AnP5}
Let $g \in \F_{\Gamma^1}(\X,Z)$ such that for all $a \in \dom(g)$, $g(a) = Q_a \in
\PP^1(\R^d,\R)$, with $Q_a(z) = g_a + D_ag \cdot (z-a)$, where
$g_a \in \R$, $D_ag \in \R^d$, and $z \in \R^d$. Suppose there exists
$P \in \mathcal{P}^2(\R^d,\R)$ such that 
\begin{equation*}
P(a) =g_a, \enspace \nabla P(a) = D_ag, \text{ for all }  a \in \dom(g).
\end{equation*}
Then
\begin{equation*}
A(g;a,b) = 0, \text{ for all } a,b \in \dom(g).
\end{equation*}
\end{lemma}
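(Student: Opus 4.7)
My plan is to prove Lemma~\ref{AnP5} by direct computation, exploiting the fact that a polynomial $P$ of degree at most two has an exact second-order Taylor expansion, so that both $P$ and $\nabla P$ at one point can be expressed in closed form in terms of their values at another.

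First, I would fix arbitrary $a, b \in \dom(g)$ and substitute the hypothesis $g_a = P(a)$, $D_a g = \nabla P(a)$ (and likewise at $b$) into the definition of $A(g;a,b)$, so that the goal becomes showing
\begin{equation*}
2\bigl(P(a) - P(b)\bigr) + \bigl(\nabla P(a) + \nabla P(b)\bigr)\cdot(b-a) = 0.
\end{equation*}
Since $P \in \mathcal{P}^2(\R^d,\R)$, Taylor's formula at $a$ is exact and gives
\begin{equation*}
P(b) = P(a) + \nabla P(a)\cdot(b-a) + \tfrac{1}{2}(b-a)^{T} H (b-a),
\end{equation*}
where $H$ is the (constant, symmetric) Hessian of $P$; similarly $\nabla P(b) = \nabla P(a) + H(b-a)$.

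Then I would substitute these two identities into the left-hand side above. The term $2(P(a)-P(b))$ contributes $-2\nabla P(a)\cdot(b-a) - (b-a)^{T} H (b-a)$, while $\bigl(\nabla P(a)+\nabla P(b)\bigr)\cdot(b-a)$ expands to $2\nabla P(a)\cdot(b-a) + (b-a)^{T} H (b-a)$. Adding these two expressions yields $0$, which is exactly what is needed.

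There is no real obstacle here: the lemma is essentially the statement that the numerator in the definition of $A$ is the remainder in a symmetrized Taylor expansion, which vanishes identically on quadratics. The only thing to be slightly careful about is using the symmetry of $H$ when rearranging the cross term $(a+b)^{T} H(b-a) = b^{T} H b - a^{T} H a$ (if one prefers the direct expansion in coordinates rather than the Taylor formulation); both approaches give the cancellation in one line.
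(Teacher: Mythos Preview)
Your proof is correct and is exactly the kind of direct computation the paper has in mind; the paper itself omits the proof, noting only that it is ``easy to check,'' and your Taylor-expansion argument (or equivalently the coordinate expansion using the symmetry of the Hessian) is the natural one-line verification.
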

\begin{proof}
Omitted.
\end{proof}

\begin{lemma}\label{AnP4}
All minimal extensions of $f$ coincide on the line segments $[x,c]$ and $[c, y]$.
\end{lemma}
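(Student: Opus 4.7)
The plan is to show directly, using only Lemmas \ref{AnP2} and \ref{AnP3} together with the structure of $\Gamma^1$, that for any minimal extension $g$ of $f$ and any $a \in [x,c] \cup [c,y]$, the jet $g(a) = g_a + D_ag \cdot (z - a) \in \PP^1(\R^d,\R)$ is forced to equal $U(a) = J_aF$, where $F$ is the explicit extension from Lemma \ref{AnP3}. I would determine $D_ag$ first and then $g_a$.

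Fix $a \in [x,c]$ and set $t = \|a - x\|/\|x-c\|$. The inequality $B(g; p, q) \leq \Gamma^1(g; p, q) \leq M$, valid for any minimal extension, makes the map $p \mapsto D_pg$ an $M$-Lipschitz map on $\R^d$. Lemma \ref{AnP2} supplies $D_cg = D_c\tilde f$, and the identity $D_c\tilde f = D_xf + sM(x-c)$ yields the extremal relation $\|D_xg - D_cg\| = M\|x-c\|$. Applying the equality case of the triangle inequality to $D_xg - D_ag$ and $D_ag - D_cg$ (whose norms are bounded by $M\|x-a\|$ and $M\|a-c\|$ respectively, with $\|x-a\|+\|a-c\|=\|x-c\|$) forces the positively collinear decomposition
\begin{equation*}
D_ag = (1-t)D_xf + tD_c\tilde f = D_xf + tsM(x-c),
\end{equation*}
which matches $\nabla F(a)$.

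With $D_ag$ fixed, a direct computation gives $B(g; x, a) = \|D_xf - D_ag\|/\|x-a\| = M$. Plugging into \eqref{AnQ0} reduces the inequality $\Gamma^1(g;x,a) \leq M$ to $\sqrt{A(g;x,a)^2 + M^2} + |A(g;x,a)| \leq M$, which forces $A(g; x, a) = 0$. This is a single linear equation that determines $g_a$ uniquely; simplifying with the known expression for $D_ag$ gives $g_a = f_x + D_xf \cdot (a-x) - s\frac{M}{2}\|a-x\|^2 = F(a)$. Hence $g(a) = J_aF = U(a)$ on $[x,c]$. The segment $[c,y]$ is handled symmetrically, using the companion relation $\|D_cg - D_yg\| = M\|c-y\|$ coming from $D_yf = D_c\tilde f + sM(y-c)$ in Lemma \ref{AnP2}.

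The crucial point is the rigidity argument in the first step: it depends on the extremal equality $\|D_xg - D_cg\| = M\|x-c\|$ in the $M$-Lipschitz bound, which is precisely what Lemma \ref{AnP2} provides at the special point $c$. Without this extremal equality, the gradient $D_ag$ could roam within a ball of positive radius, and the value $g_a$ would correspondingly have nontrivial freedom; the uniqueness is a manifestation of ``rigidity along the $\Gamma^1$-geodesic'' between the two jets at $x$ and $c$. All the remaining steps are routine algebraic manipulations using the definitions of $A$, $B$, and $\Gamma^1$.
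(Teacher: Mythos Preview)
Your argument is correct and is a genuinely different route from the paper's. The paper writes an arbitrary minimal extension as a perturbation $W(a)=U(a)+(\delta_a,\Delta_a)$ of the explicit extension $U$, invokes Lemma~\ref{AnP5} to get $A(U;e,a)=0$ for $e\in\{x,c\}$, and then expands the constraint $|A(W;e,a)|+B(W;e,a)^2/(2M)\le M/2$ at \emph{both} endpoints $e=x$ and $e=c$; opening the absolute values yields four linear inequalities in $(\delta_a,\Delta_a)$ which are combined with suitable convex weights to force $\Delta_a=0$ and then $\delta_a=0$. Your approach instead separates the two unknowns cleanly: you first pin down $D_ag$ by a pure rigidity argument for $M$-Lipschitz maps (the equality $\|D_xg-D_cg\|=M\|x-c\|$ from Lemma~\ref{AnP2} together with the two ball constraints $\|D_xg-D_ag\|\le M\|x-a\|$, $\|D_ag-D_cg\|\le M\|a-c\|$ forces the equality case of the triangle inequality), and only then use the single pair $(x,a)$ and the saturation $B(g;x,a)=M$ to force $A(g;x,a)=0$ and hence determine $g_a$. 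Your proof is more geometric and makes transparent \emph{why} uniqueness holds along $[x,c]$: it is exactly the rigidity of a Lipschitz map along a segment on which its Lipschitz bound is attained. The paper's proof is more mechanical but has the virtue of being a direct calculation requiring no insight about the equality case. One small imprecision: you say $p\mapsto D_pg$ is $M$-Lipschitz ``on $\R^d$'', but a minimal extension need only be defined on a set containing $\overline{B}_{1/2}(x,y)$; since $[x,c]\subset\overline{B}_{1/2}(x,y)$ this does not affect the argument.
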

\begin{proof}
First, let $F$ be the minimal extension of $f$ defined in Lemma
\ref{AnP3}, and let $U$ be the $1$-field corresponding to $F$ that was
defined in remark \ref{rem: $1$-field from function}. In particular,
recall that we have:
\begin{equation*}
U(a)(z) = J_aF(z) = F(a) + \nabla F(a) \cdot (z-a), \enspace a \in \dom(F).
\end{equation*}

Now Let $W$ be an arbitrary minimal extension of $f$ such that for all
$a \in \dom(W)$, $W(a) = Q_a \in \PP^1(\R^d,\R)$, with $Q_a(z) = W_a +
D_aW \cdot (z-a)$, where $W_a \in \R$, $D_aW \in \R^d$, and
$z \in \R^d$. We now restrict our attention to $[x,c] \cup [c,y]$. For
any $a \in [x,c] \cup [c,y]$, we write $W(a) = Q_a$ in the following form:
\begin{equation*}
Q_a(z) = F(a) + \nabla F(a) \cdot (z-a) + \delta_a + \Delta_a \cdot
(z-a), \enspace z \in \R^d,
\end{equation*}
where $\delta_a \in \R$ and $\Delta_a \in \R^d$. In particular, we have
\begin{align*}
W_a &= F(a) + \delta_a, \\
D_aW &= \nabla F(a) + \Delta_a.
\end{align*}
Since $U$ is a minimal extension of $f$, it is enough to show that $\delta_a=0$ and 
$ \Delta_a = 0$ for $a \in [x,c] \cup [c,y]$. By symmetry, without
lost generality let us suppose that $a \in [x,c]$.
Since $W$ is a minimal extension of $f$, we have $W_x= F(x) = f_x$,
and by Lemma \ref{AnP2}, $W_c= F(c)$. Using \eqref{AnQ0} and
\eqref{AnQ1}, and once again since $W$ is a minimal extension of $f$, the following
inequality must be satisfied:
\begin{equation}\label{uni1}
|A(W;e,a)| + \dfrac{B(W;e,a)^2}{2M} \leq  \dfrac{M}{2}, \quad e \in \{x,c\}. 
\end{equation}
Using Lemma \ref{AnP5} for $U$ restricted to $\{x,a,c\}$ we have
\begin{equation}\label{uni2}
A(U;e,a)= 0, \quad e \in \{x,c\}. 
\end{equation}
Therefore
\begin{equation}\label{uni22}
A(W;e,a) = \dfrac{|-2\delta_a+ \Delta_a \cdot (e-a)|}{\|e-a\|^2},
\quad e \in \{x,c\}.
\end{equation}

Since  $a \in [x,c]$, we can write $a = c +\alpha(x-c)$ with $\alpha
\in [0,1]$. Using \eqref{uni1} and  \eqref{uni2}, the definition of
$U$, and after simplification, $\delta_a$ and $\Delta_a$ must
satisfy the following inequalities:
\begin{align}
-2\delta_a +  \alpha(1+s) \Delta_a \cdot (c-x)
+\dfrac{\|\Delta_a\|^2}{2M} &\leq 0, \label{uni4_1} \\
2\delta_a +  \alpha(-1+s) \Delta_a \cdot (c-x) +
\dfrac{\|\Delta_a\|^2}{2M} &\leq 0, \label{uni4_2} \\
-2\delta_a - (1- \alpha)(1+s) \Delta_a \cdot (c-x) +
\dfrac{\|\Delta_a\|^2}{2M} &\leq 0, \label{uni5_1} \\
2\delta_a- (1- \alpha)(-1+s) \Delta_a \cdot (c-x) +
\dfrac{\|\Delta_a\|^2}{2M} &\leq 0. \label{uni5_2}
\end{align} 
The  inequality $(1-\alpha)( \eqref{uni4_1}+ \eqref{uni4_2} )+ \alpha(
\eqref{uni5_1}+ \eqref{uni5_2} )$  implies that $\Delta_a
=0$. Furthermore, the inequalities \eqref{uni4_1} and \eqref{uni4_2}
imply that $\delta_a =0$. Now the proof is  complete. \qed
\end{proof}

We finish this appendix by proving Proposition \ref{AnP1}. Let us use
the notations of Proposition \ref{AnP1} where $c$ satisfies Lemma
\ref{AnP2}. By Lemma \ref{AnP4}, the extension $U$ (defined in Remark
\ref{rem: $1$-field from function}) of $f$ is the
unique minimal extension of $f$ on the restriction to $[x,c] \cup
[c,y]$. Moreover, we can check that 
\begin{equation}\label{uni6}
\Gamma^1(f;x,y) = \max\{\Gamma^1(U;x,a),
\Gamma^1(U;a,y)\}, \text{ for all } a \in [x,c] \cup [c,y].
\end{equation}
Let $W$ be an extension of $f$. By contradiction suppose that there
exists $a \in [x,c] \cup [c,y]$ such that
\begin{equation}\label{uni7}
\Gamma^1(f;x,y) > \max\{\Gamma^1(W;x,a), \Gamma^1(W;a,y)\}.
\end{equation}
Using  \cite{legruyer:minimalLipschitz1Field2009}, Theorem 2.6, for
the $1$-field $g \triangleq \{f(x),W(a),f(y)\}$ of domain $\{x,a,y\}$
there exists an extension $G$ of $g$ such that
\begin{equation}\label{uni8}
\Gamma^1(G;\dom(G)) \leq  \Gamma^1(f;x,y).
\end{equation}
Therefore $G$ is a minimal extension of $f$. By Lemma \eqref{AnP4} and
the definition of $G$ we have  $W(a) = G(a) = U(a)$. But then by
\eqref{uni6},\eqref{uni7}, and \eqref{uni8} we obtain a
contradiction. Now the proof of the Proposition \ref{AnP1} is
complete. \qed

\subsection{\ref{P4} for $\Gamma^1$}

Property \ref{P4} (continuity of $\Gamma^1$) can be shown using
\eqref{AnQ0}, and a series of elementary calculations. We omit the details.

\subsection{\ref{P5} for $\Gamma^1$}

To show property \ref{P5} (continuity of $f \in \F_{\Gamma^1}(\X,Z)$), we
first recall the definition of $d_Z$. For $P \in \PP^1(\R^d,\R)$ with
$P(a) = p_0 + D_0p \cdot a$, $p_0 \in \R$, $D_0p \in \R^d$, we have
\begin{equation*}
d_Z(P,Q) = |p_0 - q_0| + \|D_0p - D_0q\|.
\end{equation*}
Recall also that for a $1$-field $f: E \rightarrow Z$, $E \subset \X$,
we have:
\begin{equation*}
x \in E \mapsto f(x)(a) = f_x + D_xf \cdot (a-x) = (f_x - D_xf \cdot
x) + D_xf \cdot a, \quad a \in \R^d.
\end{equation*}

To show continuity of $f \in \F_{\Gamma^1}(\X,Z)$ at $x \in \X$, we need the
following: for all $\varepsilon > 0$, there exists a $\delta > 0$ such
that if $\|x-y\| < \delta$, then $d_Z(f(x),f(y)) <
\varepsilon$. Consider the following:
\begin{align}
d_Z(f(x),f(y)) &= |f_x - D_xf \cdot x - f_y + D_yf \cdot y| + \|D_xf -
D_yf\| \nonumber \\
&\leq |f_x - f_y| + |D_xf \cdot x - D_yf \cdot y| + \|D_xf -
D_yf\|. \label{eqn: dR for P5 3 parts}
\end{align}
We handle the three terms \eqref{eqn: dR for P5 3 parts} separately
and in reverse order.

For the third term, recall the definition of $B(f;x,y)$ in \eqref{eqn:
  B functional}, and define $B(f;E)$ accordingly; we then have:
\begin{equation} \label{eqn: application of B}
\|D_xf - D_yf\| \leq B(f;E) \|x-y\| \leq \Gamma^1(f;E) \|x-y\|.
\end{equation}
Since $\Gamma^1(f;E) < \infty$, that completes this term.

For the second term:
\begin{align*}
|D_xf \cdot x - D_yf \cdot y| &\leq |D_xf \cdot (x-y)| + |(D_xf -
D_yf) \cdot y| \\
&\leq \|D_xf\| \|x-y\| + \|D_xf - D_yf\| \|y\|
\end{align*}
Using \eqref{eqn: application of B}, we see that this term can be
made arbitrarily small using $\|x-y\|$ as well.

For the first term $|f_x - f_y|$, define $g: E \rightarrow \R$ as
$g(x) = f_x$ for all $x \in E$. By Proposition 2.5 of
\cite{legruyer:minimalLipschitz1Field2009}, the function $g$ is
continuous. This completes the proof. \qed

\section{Proof of Proposition \ref{prop: alpha to zero}} \label{sec:
  proof of alpha to zero}

We prove Proposition \ref{prop: alpha to zero}, which we restate here:

\begin{proposition}[Proposition \ref{prop: alpha to zero}]
Let $f \in \F_{\Phi}(\X,Z)$. For any open $V \subset \dom(f)$, $V \neq \X$, and $\alpha \geq 0$, let us define
\begin{equation*}
V_{\alpha}  \triangleq \{ x \in V \mid d_{\X}(x,\partial V) \geq \alpha \}.
\end{equation*}
Then for all $ \alpha > 0$,
\begin{equation}\label{Psi1app}
\Phi(f;V_{\alpha}) \leq \max \{\Psi(f;V;\alpha), \Phi(u;\partial V_{\alpha})\},
\end{equation}
and
\begin{equation}\label{Psi2app}
\Phi(f;V) = \max \{\Psi(f;V;0), \Phi(f;\partial V)\}.
\end{equation}
\end{proposition}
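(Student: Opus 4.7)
Plan. For part \eqref{Psi1}, fix $x, y \in V_\alpha$ with $x \neq y$; by \ref{P1} it suffices to bound $\Phi(f;x,y)$ above by $\max\{\Psi(f;V;\alpha),\Phi(f;\partial V_\alpha)\}$. If $B(x;d_{\X}(x,y)) \subset V$, then since $x \in V_\alpha$ forces $d_{\X}(x,\partial V) \geq \alpha$, the pair $(x,y)$ is admissible for $\Psi(f;V;\alpha)$ and we are done. Otherwise $d_{\X}(x,\partial V) < d_{\X}(x,y)$; I apply Chasles' inequality \ref{P3} to produce a curve $\gamma \in \Gamma(x,y)$, and strict monotonicity of $t \mapsto d_{\X}(x,\gamma(t))$ from Definition \ref{def: gamma curve} yields a unique $t_1 \in (0,1]$ with $d_{\X}(x,\gamma(t_1)) = d_{\X}(x,\partial V)$. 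The pair $(x,\gamma(t_1))$ then falls in the good first case, giving $\Phi(f;x,\gamma(t_1)) \leq \Psi(f;V;\alpha)$, and Chasles delivers
\[
\Phi(f;x,y) \leq \max\{\Psi(f;V;\alpha),\, \Phi(f;\gamma(t_1),y)\}.
\]

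I would iterate this dichotomy on $(\gamma(t_1),y)$, producing a sequence of pivots $z_0 = x, z_1 = \gamma(t_1), z_2, \ldots$, each obtained by applying \ref{P3} to $(z_i,y)$ and selecting the pivot at distance $d_{\X}(z_i,\partial V)$ from $z_i$, so that $\Phi(f;z_i,z_{i+1}) \leq \Psi(f;V;\alpha)$ at every step. As long as every pivot remains in $V_\alpha$, each step advances by at least $\alpha$ in a suitable scalar, so the iteration terminates in finitely many steps at a pair of good first type, proving $\Phi(f;x,y) \leq \Psi(f;V;\alpha)$. The main obstacle is the branch in which some pivot $z_i$ leaves $V_\alpha$: here continuity of the map $s \mapsto d_{\X}(\gamma_i(s),\partial V)$ along the Chasles curve $\gamma_i$ connecting $z_i$ to $y$, combined with $y \in V_\alpha$, yields by the intermediate value theorem a parameter where $\gamma_i$ meets $\partial V_\alpha$; a symmetric analysis starting from $y$ produces a second such parameter. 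Two further applications of \ref{P3} pivoting at these two parameters decompose the residual pair into three sub-pairs: the two outer ones are handled by the good-pivot iteration (bounded by $\Psi(f;V;\alpha)$), while the middle sub-pair has both endpoints in $\partial V_\alpha$ and is bounded by $\Phi(f;\partial V_\alpha)$. The technical subtlety is that each invocation of \ref{P3} supplies its own curve rather than a canonical geodesic, so the decomposition must be organized purely through the monotone-distance property of Definition \ref{def: gamma curve}.

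For part \eqref{Psi2}, I specialize the above to $\alpha = 0$, using $V_0 = V$ and $\partial V_0 = \partial V$, with the finite-termination step in the good-pivot iteration replaced by a limit argument (since the uniform step-size bound $\alpha$ is lost). The reverse inequality $\Phi(f;V) \geq \max\{\Psi(f;V;0),\Phi(f;\partial V)\}$ is immediate from property \ref{P4}: any pair in $\partial V$ is a limit of pairs in $V$ approached from inside along the geodesic, giving $\Phi(f;\partial V) \leq \Phi(f;V)$; similarly any pair admissible for $\Psi(f;V;0)$ has its second endpoint in $\overline{V}$, approximable by points in $V$, giving $\Psi(f;V;0) \leq \Phi(f;V)$.
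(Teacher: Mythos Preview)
Your strategy differs substantially from the paper's, and as written it has a genuine gap. The paper does not iterate: for each fixed $x\in\mathring V_\alpha$ it takes the maximal ball $B(x;r_x)\subset V$, defines $M(x)=\sup\{\Phi(f;x,y):y\in\overline{V_\alpha\setminus B(x;r_x)}\}$, and chooses a \emph{closest} maximizer $y$. A single application of \ref{P3} to $(x,y)$, splitting at a point $m$ near $y$, forces $\Phi(f;x,m)<\Phi(f;x,y)$ by the minimality of $d_{\X}(x,y)$, hence $\Phi(f;x,y)\le\Phi(f;m,y)$; the right-hand side is then bounded by $\Psi(f;V;\alpha)$ (if $y$ is interior to $V_\alpha$) or by $\Phi(f;\partial V_\alpha)$ (if $y\in\partial V_\alpha$, after one more crossing argument). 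No recursion is needed.

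Your iterative pivoting scheme, by contrast, rests on the assertion that ``each step advances by at least $\alpha$ in a suitable scalar,'' but you never name the scalar, and in the generality of the paper there is no obvious one. The Chasles curve $\gamma_i\in\Gamma(z_i,y)$ is \emph{not} the geodesic; it lives in $\overline B_{1/2}(z_i,y)$ and is monotone only in distance from $z_i$. All you get for free is $d_{\X}(z_{i+1},y)\le d_{\X}(z_i,y)$, with no uniform gap, so the sequence could stall. In Euclidean space one can salvage a bound like $d_{\X}(z_{i+1},y)^2\le d_{\X}(z_i,y)^2-\alpha^2$, but the paper's hypotheses on $(\X,d_{\X})$ (midpoint and distance convexity) are weaker than CAT(0) and do not obviously deliver this. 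Your ``pivot leaves $V_\alpha$'' branch compounds the problem: splitting into three sub-pairs via two further invocations of \ref{P3} means three different curves, none of which is the one you started with, and you still need termination for the two outer sub-pairs. You flag this as a subtlety, but it is really the crux, and nothing in your outline shows how to close it.

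For \eqref{Psi2}, the paper proceeds by letting $\alpha\to 0$ in \eqref{Psi1} and proving $\Phi(f;V_\alpha)\to\Phi(f;V)$ directly via \ref{P4}; this sidesteps any need to redo the argument at $\alpha=0$, where your uniform step size vanishes entirely.
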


\setcounter{case}{0}

\begin{proof}
For the first statement fix $\alpha >0$ and an open set $V \subset
\X$, $V \neq \X$. For
proving (\ref{Psi1app}), it is  sufficient to prove that for all $x
\in \mathring{V}_{\alpha}$ and for all $y \in V_{\alpha}$ we have
\begin{equation}\label{Psi3}
\Phi(f;x,y) \leq \max\{\Psi(f;V;\alpha), \Phi(f;\partial V_{\alpha})\}.
\end{equation}

Fix  $x \in \mathring{V}_{\alpha}$. Let  $B(x;r_{x}) \subset V$ be a ball such 
that $r_{x}$ is maximized and define
\begin{equation*}
M(x) \triangleq \sup \left\{ \Phi(f;x,y) \mid  y \in \overline{V_{\alpha} 
\setminus B(x;r_{x})} \right\},
\end{equation*}
as well as
\begin{equation*}
\Delta(x) \triangleq \left\{ y \in \overline{V_{\alpha} \setminus B(x;r_{x})} \mid
\Phi(f;x,y) = M(x) \right\},
\end{equation*}
and
\begin{equation*}
\delta(x) \triangleq \inf \{d_{\X}(x,y) \mid y\in \Delta(x) \}.
\end{equation*}
We have three cases:
\begin{case}
Suppose $M(x) \leq \sup\{\Phi(f;x,y) \mid y \in B(x;r_{x}) \}$. Since
$B(x;r_{x}) \subset V$ with $r_{x} \geq \alpha$ we have 
\begin{equation*}
\Phi(f;x,y)  \leq \Psi(f;V;\alpha), \enspace \forall \, y \in B(x;r_{x}).
\end{equation*}
Therefore $M(x) \leq \Psi(f;V;\alpha)$. That completes the first case.
\end{case}

For cases two and three, assume that $M(x) > \sup\{ \Phi(f;x,y) \mid y
\in B(x;r_x)\}$ and select $y \in \Delta(x)$ with $d_{\X}(x,y) = \delta(x)$.
\begin{case}
Suppose $y \in \mathrm{int}(V_{\alpha}\setminus B(x;r_{x}))$. Let  
$B(y;r_y)\subset V$ be a ball such that $r_y$ is maximal.
Consider the curve $\gamma \in \Gamma(x,y)$ satisfying \ref{P3}. Let
$m \in \gamma \cap B(y;r_y) \cap V_{\alpha}$, $m \neq x,y$. Using \ref{P3}, we have
\begin{equation}\label{Psi4}
\Phi(f;x,y) \leq \max\{\Phi(f;x,m), \Phi(f;m,y)\}.
\end{equation}
Using the monotonicity of $\gamma$ we have $d_{\X}(x,m) < d_{\X}(x,y)$. Using the 
minimality of the distance of $d_{\X}(x,y)$ and since $m \in V_{\alpha}$ we have 
$\Phi(f;x,m) < \Phi(f;x,y)$. Therefore
\begin{equation}\label{Psi5}
\Phi(f;x,y) \leq \Phi(f;m,y).
\end{equation}
Since $m \in B(y;r_y)$ with $r_y \geq \alpha$, using the definition of $\Psi$ we have 
$\Phi(f;m,y) \leq \Psi(f;V;\alpha)$. Therefore $M(x) \leq \Psi(f;V;\alpha)$.
\end{case}

\begin{case}
Suppose $y \in \partial V_{\alpha}\setminus B(x;r_{x})$. As in case two,  let  
$B(y;r_y)\subset V$ be a ball such that $r_y$ is maximal and consider the curve 
$\gamma \in \Gamma(x,y)$ satisfying \ref{P3}. Let $m \in \gamma \cap
B(y;r_y) \cap V_{\alpha}$. If there exists $m \neq y$ in $V_{\alpha}$,
we can apply the same reasoning as in case two and we have $M(x) \leq
\Psi(f;V;\alpha)$.

If $m = y$ is the only element of $\gamma \cap B(y;r_y) \cap
V_{\alpha}$, then there still exists $m' \in \gamma \cap \partial
V_{\alpha}$ with $m' \neq y$. Using \ref{P3} we have
\begin{equation}\label{Psi6}
 \Phi(f;x,y) \leq \max\{\Phi(f;x,m'), \Phi(f;m',y)\}.
\end{equation}
Using the monotonicity of $\gamma$ we have $d_{\X}(x,m') < d_{\X}(x,y)$. Using the 
minimality of distance of $d_{\X}(x,y)$ and since $m' \in V_{\alpha}$ we have 
$\Phi(f;x,m') < \Phi(f;x,y)$. Therefore
\begin{equation}\label{Psi7}
\Phi(f;x,y) \leq \Phi(f;m',y).
\end{equation}
Since $m',y \in \partial V_{\alpha}$, we obtain the following majoration
\begin{equation}\label{Psi8}
\Phi(f;x,y) \leq \Phi(f;m',y) \leq \Phi(f;\partial V_{\alpha}),
\end{equation}
which in turn gives:
\begin{equation*}
M(x) \leq \Phi(f;\partial V_{\alpha}).
\end{equation*}
The inequality (\ref{Psi1app}) is thus demonstrated.
\end{case}

For the second statement, we note that by the definition of $\Psi$ we have
\begin{equation*}
\max \{\Psi(f;V;0), \Phi(f;\partial V)\}\leq \Phi(f;V).
\end{equation*}
Using \eqref{Psi1app}, to show \eqref{Psi2app} it is sufficient to prove
\begin{equation}\label{Psi9}
\lim_{\alpha\rightarrow 0}\Phi(f;V_{\alpha}) =\Phi(f;V).
\end{equation}
Let $\varepsilon>0$. Then there exists $x_{\varepsilon} \in V$ and $y_{\varepsilon} 
\in \overline{V}$ such that
\begin{equation*}
 \Phi(f;V) \leq \Phi(f;x_{\varepsilon},y_{\varepsilon})+ \varepsilon.
\end{equation*}
Set $r_{\varepsilon}=d_{\X}(x_{\varepsilon},\partial V)$. If $y_{\varepsilon} \in
V$, there exists $\tau_1$ with $0 < \tau_1 \leq
r_{\varepsilon}$ such that for all $\alpha$, $0 < \alpha \leq \tau_1$,
$(x_{\varepsilon},y_{\varepsilon}) \in V_{\alpha}\times V_{\alpha}$. Therefore
\begin{equation*}
\Phi(f;x_{\varepsilon},y_{\varepsilon}) \leq \Phi(f;V_{\alpha}), \quad
\forall \, \alpha, \enspace 0 < \alpha \leq \tau_1.
\end{equation*}
If, on the other hand, $y_{\varepsilon} \in \partial V$, using \ref{P4}
there exists $\tau_2$  with $0 < \tau_2 \leq
\min\{r_{\varepsilon},\tau_1\}$, such that
\begin{equation*}
|\Phi(f;x_{\varepsilon},m)-\Phi(f;x_{\varepsilon},y_{\varepsilon})|
\leq \varepsilon, \quad \forall \, m \in B(y_{\varepsilon}; \tau_2).
\end{equation*}
By choosing $m \in  B(y_{\varepsilon};\tau_2) \cap V_{\tau_2}$, we obtain
\begin{equation*}
\Phi(f;x_{\varepsilon},y_{\varepsilon})\leq
\Phi(f;V_{\alpha})+\varepsilon, \quad \forall \, \alpha, \enspace 0<
\alpha \leq \tau_2.
\end{equation*}
Therefore $\Phi(f;V) \leq \Phi(f;V_{\alpha}) + 2\varepsilon$, for all
$\alpha$ such that $0< \alpha \leq \tau_2$ and for all $\varepsilon >
0$. Thus \eqref{Psi9} is true. \qed
\end{proof}

\bibliography{AMLEbibliography}

 \end{document}